\theoremstyle{plain}
\newtheorem{theorem}{Theorem}[section]
\newtheorem{lemma}[theorem]{Lemma}
\newtheorem{corollary}[theorem]{Corollary}
\newtheorem{propn}[theorem]{Proposition}
\newtheorem{remark}[theorem]{Remark}
\theoremstyle{definition}
\newtheorem{defn}[theorem]{Definition}
\newtheorem{eg}[theorem]{Example}
\title{Symmetric bilinear forms, superalgebras and integer matrix factorization}
\author{Dan Fretwell, Jenny Roberts.}
\date{}
\subjclass[2020]{17A70, 15A23, 15A63, 11E39, 11H55}
\keywords{Symmetric bilinear forms, Superalgebras, Integer matrix factorization}
\begin{document}

\begin{abstract}
   We construct and investigate certain (unbalanced) superalgebra structures on $\text{End}_K(V)$, with $K$ a field of characteristic $0$ and $V$ a finite dimensional $K$-vector space (of dimension $n\geq 2$). These structures are induced by a choice of non-degenerate symmetric bilinear form $B$ on $V$ and a choice of non-zero base vector $w\in V$. After exploring the construction further, we apply our results to certain questions concerning integer matrix factorization and isometry of integral lattices.
\end{abstract}

\maketitle

\section{Introduction}

Let $K$ be a field and $A$ be a $K$-algebra. We say that $A$ is a superalgebra over $K$ if it is equipped with a $\mathbb{Z}/2\mathbb{Z}$-grading, i.e.\ a vector space decomposition $A = A^{(0)}\oplus A^{(1)}$ satisfying $A^{(i)}A^{(j)} \subseteq A^{(i+j \bmod 2)}$ for all $i,j\in\{0,1\}$. Every $K$-algebra gives a trivial superalgebra structure by setting $A^{(0)} = A$ and $A^{(1)} = \{0\}$, but non-trivial examples of superalgebras exist, e.g.\ exterior algebras, Clifford algebras and polynomial rings (decomposition given by symmetric and antisymmetric polynomials). Superalgebras appear throughout Mathematics but are also of direct importance to Physics, since they provide robust mathematical models for supersymmetry (e.g.\ see \cite{deligne} and \cite{varadarajan}).

Let $K$ be a field and $V$ be a $K$-vector space of dimension $n\geq 2$. The endomorphism algebra $\text{End}_K(V)$ is then a $K$-algebra, and one asks whether there are any interesting superalgebra structures on this algebra. One particular well known example comes to mind. If a basis for $V$ is fixed then $\text{End}_K(V)\cong M_n(K)$, and a superalgebra structure is found by letting $A^{(0)}$ and 
$A^{(1)}$ be the subspaces of symmetric and antisymmetric matrices respectively. It is natural to wonder, as we have, whether there are other interesting superalgebra structures to be found beyond this example.

In this paper we provide a new class of superalgebra structures on $\text{End}_K(V)$ for any field $K$ of characteristic zero and any finite dimensional $K$-vector space $V$ (of dimension $n\geq 2$). These are induced from a choice of non-degenerate symmetric bilinear form $B$ on $V$ and a fixed choice of non-zero vector $w\in V$. More precisely, we show in Theorem \ref{thm:superalg} that there is a superalgebra decomposition: \[\text{End}_K(V) = E^{(0)}(B,w)\oplus E^{(1)}(B,w),\] where \begin{align*}E^{(0)}(B,w) &= \{ \phi \in \text{End}_K(V) \,|\, B(u, \phi(w)) = B(w, \phi(u)) = 0, \forall \ u \in \{w\}_B^\perp\},\\ E^{(1)}(B,w) &= \{ \phi \in \text{End}_K(V) \,|\, B(u, \phi(v)) =0, \forall \ u,v \in \{w\}_B^\perp, B(w, \phi(w)) =0 \}.\end{align*}

Our recipe is inspired by results coming from the papers \cite{higham}, \cite{hill} and \cite{lettington}, which correspond to the special case with $B$ the standard symmetric bilinear form and $w = (1,1,...,1)$ (after choosing a basis for $V$). In this case we have that $E^{(0)}(B,w)$ is the subspace of semi-magic squares and $E^{(1)}(B,w)$ is the subspace of matrices satisfying a vertex cross-sum property. We give a similar interpretation of the subspaces $E^{(i)}(B,w)$ in more generality, although we have tried to present our results in a basis independent way where possible.

Unlike the superalgebra structures given by Clifford algebras, the superalgebra structure given above is in general unbalanced. If $\text{dim}(V) = n$ then we have that $\text{dim}(E^{(0)}(B,w)) = n^2-2n+2$ and $\text{dim}(E^{(1)}(B,w)) = 2n-2$ (which are not equal if $n\geq 3$). Also, in general the above superalgebra structure is not equivalent to a symmetric and antisymmetric matrix decomposition (e.g.\ if $n\neq 4$ then we cannot have $(n^2-2n+2, 2n-2) = \left(\frac{n(n+1)}{2},\frac{n(n-1)}{2}\right)$).

After establishing the above result, we study potential implications towards the isometry problem for quadratic spaces. If $B'$ is another non-degenerate symmetric bilinear form on $V$ then $(V,B)$ and $(V,B')$ are isometric if $B' = B_{\phi}$ for some $\phi\in\text{GL}(V)$ (here $B_{\phi}(x,y) = B(\phi(x),\phi(y))$). Expanding $\phi$ into its ``odd" and ``even" parts according to the superalgebra structure induces a decomposition of $B_{\phi}$ into a sum of symmetric bilinear forms (Proposition \ref{propn:decomp}). Careful substitution then provides us with a set of necessary equations that must be satisfied by $\phi$ in order for the equality $B' = B_{\phi}$ to hold (Theorem \ref{thm:wt}). In practice, these equations can either be used to prove the non-existence of $\phi$ or to provide information about it.

Certain long standing problems in number theory concern the topic of integer matrix factorization. One particular family of such problems considers factorizations of the form $\mathcal{B}' = M^TM$ with $\mathcal{B}',M\in\text{GL}_n(\mathbb{Z})$ and $\mathcal{B}'$ symmetric of determinant $1$. Indeed, classical results (e.g.\ discussed in the paper of Mordell \cite{mordell}) show that if $n\leq 7$ and $\mathcal{B}'$ is positive definite then such a factorization is always guaranteed to exist (for $n=8$ a counterexample is given by any Gram matrix of the $E_8$ lattice under the standard Euclidean inner product).

The above problem can be instead viewed as an isometry problem for integral lattices. Interpreting $\mathcal{B}'$ as the Gram matrix of a non-degenerate symmetric bilinear form $B'$ on $V = \mathbb{Q}^n$, we see that the lattice $\Lambda = \mathbb{Z}^n$ is integral with respect to $B'$. The existence of $M$ in the matrix factorization is then equivalent to the equality $B' = B_{\phi}$ for some $\phi\in\text{GL}(\Lambda)\subset\text{GL}(V)$, where $B$ is the standard symmetric bilinear form (corresponding to Gram matrix $\mathcal{B} = I_n$ with respect to the standard basis). It is then clear that a special case of the above decomposition results (or at least their integral counterparts) can then be of use in first determining whether such an isometry $\phi$ exists and then determining precise information about it. This strategy was adopted by the authors in the papers \cite{higham} and \cite{lettington} to produce new insights on this problem (although a matrix superalgebra decomposition was used, as opposed to the analogous isometry decomposition that we consider). In particular, the authors use a superalgebra decomposition of $M_4(\mathbb{Q})$ to classify integer (and rational) matrix factorizations of the Wilson matrix: \[W = \begin{pmatrix}5 & 7 & 6 & 5\\ 7 & 10 & 8 & 7\\ 6 & 8 & 10 & 9\\ 5 & 7 & 9 & 10\end{pmatrix},\] a symmetric integral positive definite matrix of determinant $1$ (that is mildly ill-conditioned).

More generally, one could ask for integer matrix factorizations of the form $\mathcal{B}' = M^T\mathcal{B}M$ for an arbitrary $\mathcal{B},\mathcal{B}',M\in\text{GL}_n(\mathbb{Z})$ with $\mathcal{B},\mathcal{B}'$ symmetric and of equal determinant. As above, our more general results let us consider this question from the point of view of decompositions of lattice isometries (again, we require $B' = B_{\phi}$ for some $\phi\in \text{GL}(\Lambda)$, but now $B$ is not necessarily the standard symmetric bilinear form). We give a precise set of integral equations that must be satisfied for such an isometry $\phi$ to exist, and demonstrate via examples how these equations can be used to prove that certain integer matrix factorizations are impossible (alternatively proving that certain families of integral symmetric bilinear forms are non-isometric).

To summarise, our general approach and results lead to the following insights and improvements to the existing literature:

\begin{itemize}
\item{By considering an arbitrary non-degenerate symmetric bilinear form we are able to describe a whole family of superalgebra structures. The superalgebras constructed in \cite{higham}, \cite{hill}, \cite{lettington} are a special case of our construction (corresponding to the standard symmetric bilinear form and $w = (1,1,...,1)$).}
\item{By considering this family of superalgebra structures, we are able to provide results concerning the existence/non-existence of matrix factorizations of the form $\mathcal{B}' = M^T\mathcal{B}M$ with $M,\mathcal{B},\mathcal{B}'\in\text{GL}_n(\mathbb{Z})$ and $\mathcal{B},\mathcal{B}'$ symmetric. In comparison, the results in \cite{higham}, \cite{hill}, \cite{lettington} are restricted to the special case of $\mathcal{B} = I_n$. Since integer matrix factorizations of the form $\mathcal{B}' = M^TM$ are not always guaranteed to exist, it is beneficial to consider the more general problem mentioned above.}

\item{Our construction is coordinate free, which makes the results smoother and clearer to present. This approach has also provided new insights and strengthened previous results in this area. For example, Theorem $3.1$ of \cite{higham} provides a quadratic form obstruction to the existence of integer matrix factorizations of the form $\mathcal{B}' = M^TM$. This obstruction is equivalent to the first equation in our Corollary \ref{cor:quadform} (when specialised to the special case of $\mathcal{B}' = I_n$). The other equations in our corollary give the extra ``missing equations" necessary to complete the story.}

\item{We allow ourselves to take arbitrary base vector $w$ in our constructions (the papers \cite{higham}, \cite{hill}, \cite{lettington} only consider the choice $w = (1,1,...,1)$). This also helps with the above application, since other choices of $w$ can often lead to simpler sets of equations in Corollary \ref{cor:quadform}.}
\end{itemize}

\section{Symmetric bilinear forms and superalgebras}

Let $K$ be a field with $\text{char}(K) = 0$ and let $V$ be a finite dimensional $K$-vector space equipped with a non-degenerate symmetric bilinear form $B: V\times V \rightarrow K$. We assume that $\text{dim}_K(V) = n\geq 2$ and we fix a choice of vector $w\in V\backslash\{0\}$. Denote by $\{w\}_B^{\perp}$ the orthogonal complement of $w$ with respect to $B$. 

The aim of this section is to use the above data to obtain a superalgebra structure on $\text{End}_K(V)$, this being a decomposition of $K$-vector spaces \[\text{End}_K(V) = E^{(0)}(B,w)\oplus E^{(1)}(B,w)\] satisfying $E^{(i)}(B,w)E^{(j)}(B,w)\subseteq E^{(i+j \bmod 2)}(B,w)$ for all $i,j\in\{0,1\}$ (i.e.\ a  $\mathbb{Z}/2\mathbb{Z}$-graded algebra structure).

\subsection{The subspace $E^{(0)}(V)$}

We define the following subspace of $\text{End}_K(V)$:

\begin{equation*}
    E^{(0)}(B,w) = \{ \phi \in \text{End}_K(V) \,|\, B(u, \phi(w)) = B(w, \phi(u)) = 0, \forall \ u \in \{w\}_B^\perp\}.
\end{equation*}

\begin{lemma}
\label{lemma:equivSn}
$\phi \in E^{(0)}(B,w)$ if and only if there exists $\lambda\in K$ such that
\begin{align*}
    B(w,\phi(\cdot))&=\lambda B(w,\cdot), \\
    B(\cdot,\phi(w))&=\lambda B(\cdot,w).
\end{align*}
\end{lemma}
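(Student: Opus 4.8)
The plan is to prove the two implications separately, the crux being to produce a \emph{single} scalar $\lambda$ that governs both identities simultaneously.

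The reverse implication is routine. Assuming the two displayed identities hold for some $\lambda$, I would feed an arbitrary $u\in\{w\}_B^\perp$ into each: since $B(w,u)=B(u,w)=0$ by definition of the orthogonal complement, the right-hand sides vanish, giving $B(w,\phi(u))=\lambda B(w,u)=0$ and $B(u,\phi(w))=\lambda B(u,w)=0$. Hence $\phi\in E^{(0)}(B,w)$.

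For the forward implication the guiding idea is to pass to linear functionals on $V$. Write $\ell_w\colonequals B(w,\cdot)\in V^\ast$; because $B$ is non-degenerate and $w\neq 0$, this functional is nonzero, and its kernel is exactly the hyperplane $\{w\}_B^\perp$. By symmetry of $B$ one also has $B(\cdot,w)=\ell_w$. The defining conditions on $\phi\in E^{(0)}(B,w)$ say precisely that the two functionals $B(w,\phi(\cdot))$ and $B(\cdot,\phi(w))$ both annihilate $\{w\}_B^\perp=\ker\ell_w$. Since the annihilator of a hyperplane is one-dimensional and spanned by any nonzero functional cutting it out, each must be a scalar multiple of $\ell_w$: there exist $\lambda,\mu\in K$ with $B(w,\phi(\cdot))=\lambda\ell_w$ and $B(\cdot,\phi(w))=\mu\ell_w$, which is exactly the pair of identities (with possibly different constants).

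The remaining and genuinely delicate step is to force $\lambda=\mu$. I would evaluate both identities at $w$ itself: the first gives $B(w,\phi(w))=\lambda B(w,w)$ and the second gives $B(w,\phi(w))=\mu B(w,w)$, whence $(\lambda-\mu)B(w,w)=0$. This yields $\lambda=\mu$ as soon as $B(w,w)\neq 0$, i.e.\ whenever $w$ is anisotropic (the case relevant to the applications, and the one consistent with the stated count $\dim E^{(0)}(B,w)=n^2-2n+2$). I expect this reconciliation of the two scalars to be the main obstacle, since for isotropic $w$ one has $w\in\{w\}_B^\perp$ and the evaluation at $w$ degenerates. Under the standing assumption $B(w,w)\neq 0$ there is in fact a cleaner route that makes a single $\lambda$ manifest from the outset: the orthogonal splitting $V=Kw\oplus\{w\}_B^\perp$ holds, and the condition $B(u,\phi(w))=0$ for all $u\in\{w\}_B^\perp$ says $\phi(w)\in(\{w\}_B^\perp)^\perp=Kw$, so $\phi(w)=\lambda w$ for a unique $\lambda$; testing $B(\cdot,\phi(w))=\lambda B(\cdot,w)$ is then immediate, and $B(w,\phi(\cdot))=\lambda B(w,\cdot)$ follows by checking it on the two summands $Kw$ and $\{w\}_B^\perp$ using the remaining defining condition.
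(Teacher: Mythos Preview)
Your argument is correct and follows the same overall line as the paper: obtain two a priori different scalars for the two identities and then reconcile them by evaluating at $w$ using $B(w,w)\neq 0$. The only notable difference is the mechanism for producing the second scalar. The paper passes to the adjoint: it checks that $\phi^\dagger\in E^{(0)}(B,w)$, applies the double-perp observation to $\phi^\dagger$ to get $\phi^\dagger(w)=\lambda' w$, and then rewrites $B(w,\phi(\cdot))$ in terms of $\phi^\dagger$. You instead argue directly in $V^\ast$ (both functionals kill the hyperplane $\ker\ell_w$, hence lie in $K\ell_w$), or alternatively via the splitting $V=Kw\oplus\{w\}_B^\perp$. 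Your routes are slightly more elementary in that they avoid introducing the adjoint here; the paper's adjoint trick, on the other hand, foreshadows Lemma~\ref{lemma:rewriteSV}. You are also right that the step $\lambda=\mu$ genuinely uses $B(w,w)\neq 0$; the paper invokes this too (attributing it to non-degeneracy, which is a standing tacit assumption on $w$ throughout the paper, as the weight map and later results require it).
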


\begin{proof}
The reverse implication is clear. To prove the forward implication we first note that since $B(u,\phi(w)) =0$ for all $u \in \{w\}_B^\perp$, we have that $\phi(w) \in \{w\}_B^{\perp \perp}=Kw$, so that
\begin{equation*}
    \phi(w)=\lambda w
\end{equation*}
for some $\lambda \in K$. From here, we see that 
\begin{equation*}
    B(\cdot,\phi(w)) = \lambda B(\cdot,w).
\end{equation*}
For the second equation, let $\phi^\dagger$ be the adjoint of $\phi$ with respect to $B$. Then we have that $\phi^\dagger\in E^{(0)}(B,w)$, since: \begin{align*}B(u,\phi^\dagger(w)) &= B(\phi^\dagger(w),u) = B(w,\phi(u)) = 0\\ B(w,\phi^\dagger(u)) &= B(\phi^\dagger(u),w) = B(u,\phi(w)) = 0\end{align*} for all $u\in\{w\}_B^{\perp}$. It follows that: \[B(\cdot, \phi(w)) = B(\phi(w), \cdot) = B(w, \phi^\dagger(\cdot)) = \lambda' B(w,\cdot) = \lambda' B(\cdot, w),\] for some $\lambda'\in K$.

Finally, the fact that $\lambda = \lambda'$ follows by evaluating both maps at $w$: \[\lambda B(w,w) = B(w,\phi(w)) = B(\phi(w),w) = B(w,\phi^\dagger(w)) = \lambda' B(w,w),\] and noting that $B(w,w)\neq 0$ by non-degeneracy.
\end{proof}

Choosing a basis $\{e_1, e_2, ..., e_n\}$ for $V$, we get a matrix $M\in M_n(K)$ associated to $\phi$ and a Gram matrix $\mathcal{B} = (B(e_i,e_j))$ associated to $B$. The conditions above then mean that all row/column sums of the matrix $\mathcal{B}M$ (weighted with respect to the coordinates of $w$) are a fixed scaling of those of the matrix $\mathcal{B}$. In the particular case of $w = (1,1,...,1)$ and $\mathcal{B}=I_n$, we recover the conditions for $M$ to be a semi-magic square (as observed in \cite{higham} and \cite{lettington}).

\begin{defn}
If $\phi\in E^{(0)}(B,w)$ then we define the weight of $\phi$ as follows: \[\text{wt}(\phi)_{B,w} = \frac{B(w,\phi(w))}{B(w,w)}\]
(i.e.\ the constant $\lambda\in K$ corresponding to $\phi\in E^{(0)}(B,w)$).
\end{defn} 

In \cite{higham} and \cite{lettington}, a similar notion of weight was given to matrices $M\in M_n(K)$:
\begin{equation*}
    \text{wt}(M)=\frac{1}{n^2}\sum_i\sum_j m_{ij}=\frac{1}{n^2} 1_n^T M1_n.
\end{equation*}
where $1_n = (1,1,...,1)\in K^n$. By comparison, when $B$ is the standard symmetric bilinear form and $w = (1,1,...,1)$ (after choosing a basis for $V$) then our definition gives $\text{wt}(M)_{B,w} = n\,\text{wt}(M)$, a simple scaling of the above definition.

We can use the weight map to decompose the subspace $E^{(0)}(B,w)$ further. For $u,v\in V$ let $\phi_{B,u,v}\in\text{End}_K(V)$ be given by \[\phi_{B,u,v}(x) = B(v,x)u,\] the endomorphism corresponding to the outer product of $u$ and $v$ with respect to $B$ (i.e. in coordinates this is given by the matrix $uv^T \mathcal{B}$).

The following Lemma will be useful later.

\begin{lemma}
There exists a decomposition of $K$-vector spaces: \[E^{(0)}(B,w) = E^{(0)}_0(B,w)\oplus K\phi_{B,w,w},\] where $E^{(0)}_0(B,w)$ is the subspace consisting of weight $0$ endomorphisms.
\end{lemma}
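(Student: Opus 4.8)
The plan is to recognise this as a standard ``functional splitting'' statement: on any $K$-vector space $W$, a nonzero linear functional $f\colon W\to K$ together with a vector $x\in W$ satisfying $f(x)\neq 0$ produces a direct sum decomposition $W=\ker(f)\oplus Kx$. Here I would take $W=E^{(0)}(B,w)$, let $f$ be the weight map $\phi\mapsto\text{wt}(\phi)_{B,w}$, and take $x=\phi_{B,w,w}$. The subspace $E^{(0)}_0(B,w)$ is by definition precisely $\ker(f)$, so once the functional is shown to be linear and $\phi_{B,w,w}$ is exhibited as an element of $E^{(0)}(B,w)$ of nonzero weight, the decomposition follows immediately.

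First I would verify that the weight map is a well-defined $K$-linear functional on $E^{(0)}(B,w)$. By Lemma \ref{lemma:equivSn}, each $\phi\in E^{(0)}(B,w)$ determines a unique scalar $\lambda$, and $\text{wt}(\phi)_{B,w}=B(w,\phi(w))/B(w,w)$ recovers this $\lambda$; since $B(w,w)\neq 0$ by non-degeneracy and $\phi\mapsto B(w,\phi(w))$ is linear in $\phi$, the map $\phi\mapsto\text{wt}(\phi)_{B,w}$ is a genuine element of the dual space. Its kernel is exactly $E^{(0)}_0(B,w)$.

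Next I would check that $\phi_{B,w,w}$ lies in $E^{(0)}(B,w)$ and has nonzero weight. For any $u\in\{w\}_B^{\perp}$ we have $\phi_{B,w,w}(w)=B(w,w)\,w$, so $B(u,\phi_{B,w,w}(w))=B(w,w)\,B(u,w)=0$, and $\phi_{B,w,w}(u)=B(w,u)\,u=0$, so $B(w,\phi_{B,w,w}(u))=0$; hence $\phi_{B,w,w}\in E^{(0)}(B,w)$. Evaluating the weight gives
\[
\text{wt}(\phi_{B,w,w})_{B,w}=\frac{B(w,\phi_{B,w,w}(w))}{B(w,w)}=\frac{B(w,w)^2}{B(w,w)}=B(w,w)\neq 0.
\]
In particular the restriction of the weight functional to the line $K\phi_{B,w,w}$ is an isomorphism onto $K$, so $K\phi_{B,w,w}\cap E^{(0)}_0(B,w)=\{0\}$.

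Finally I would assemble the direct sum. Given any $\phi\in E^{(0)}(B,w)$, set $c=\text{wt}(\phi)_{B,w}/B(w,w)$; then $\phi-c\,\phi_{B,w,w}$ has weight $\text{wt}(\phi)_{B,w}-c\,B(w,w)=0$, so it lies in $E^{(0)}_0(B,w)$, and $\phi=(\phi-c\,\phi_{B,w,w})+c\,\phi_{B,w,w}$ exhibits the required splitting, with uniqueness guaranteed by the trivial intersection above. I do not expect any genuine obstacle here; the only points requiring care are confirming the weight map is well-defined and linear (which rests on Lemma \ref{lemma:equivSn}) and the small computation that $\phi_{B,w,w}$ satisfies the defining conditions of $E^{(0)}(B,w)$.
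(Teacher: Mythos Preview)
Your proof is correct and follows essentially the same approach as the paper: both note that the weight map is linear and that $\phi_{B,w,w}\in E^{(0)}(B,w)$ has nonzero weight $B(w,w)$, after which the decomposition is the standard kernel-plus-line splitting for a nonzero linear functional. (There is a harmless typo in your computation: $\phi_{B,w,w}(u)=B(w,u)\,w$, not $B(w,u)\,u$, but since $B(w,u)=0$ for $u\in\{w\}_B^\perp$ the conclusion is unaffected.)
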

\begin{proof}
Note first that the weight is a linear map $\text{End}_K(V) \rightarrow K$. The decomposition will then follow from the fact that $\phi_{B,w,w} \in E^{(0)}(B,w)$ has weight $B(w,w) \neq 0$. This is a simple consequence of Lemma \ref{lemma:equivSn}, since:

\begin{align*}
B(w, \phi_{B,w,w}(\cdot)) &= B(w,B(w,\cdot)w) = B(w,w)B(w,\cdot),\\
B(\cdot, \phi_{B,w,w}(w)) &= B(\cdot, B(w,w)w) = B(w,w)B(\cdot,w).
\end{align*}
\end{proof}

The above implies that every $\phi\in E^{(0)}(B,w)$ can be uniquely written in the form \[\phi = \phi_0 + \frac{\text{wt}(\phi)_{B,w}}{B(w,w)}\phi_{B,w,w}\] for some $\phi_0$ of weight $0$.

\subsection{The subspace $E^{(1)}(B,w)$}
\hfill\\

We define another subspace of $\text{End}_K(V)$: 
\begin{equation*}
    E^{(1)}(B,w) = \{ \phi \in \text{End}_K(V) \,|\, B(u, \phi(v)) =0, \forall \ u,v \in \{w\}_B^\perp, B(w, \phi(w)) =0 \}.
\end{equation*}

\begin{lemma}
\label{lemma:equivVn}
$\phi \in E^{(1)}(B,w)$ if and only if $\phi =\phi_{B,w,a}+\phi_{B,b,w}$ for some $a,b \in \{w\}_B^\perp$.
\end{lemma}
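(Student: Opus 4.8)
The plan is to prove both implications by working with the orthogonal decomposition $V = Kw \oplus \{w\}_B^\perp$ coming from non-degeneracy (under which $B$ restricts to a non-degenerate form on $\{w\}_B^\perp$), and to read the two defining conditions of $E^{(1)}(B,w)$ as constraints on how $\phi$ acts on the two summands. The reverse implication is a direct computation. Given $a,b\in\{w\}_B^\perp$, set $\phi=\phi_{B,w,a}+\phi_{B,b,w}$, so that $\phi(x)=B(a,x)w+B(w,x)b$. For $u,v\in\{w\}_B^\perp$ we have $B(w,v)=0$, hence $\phi(v)=B(a,v)w$ and $B(u,\phi(v))=B(a,v)B(u,w)=0$ since $B(u,w)=0$. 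Similarly $\phi(w)=B(a,w)w+B(w,w)b=B(w,w)b$ because $B(a,w)=0$, and therefore $B(w,\phi(w))=B(w,w)B(w,b)=0$. Thus $\phi\in E^{(1)}(B,w)$.

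For the forward implication, I would first record the identity $(\{w\}_B^\perp)^\perp=Kw$. The condition $B(u,\phi(v))=0$ for all $u,v\in\{w\}_B^\perp$ then says precisely that $\phi(v)\in(\{w\}_B^\perp)^\perp=Kw$ for every $v\in\{w\}_B^\perp$, so that $v\mapsto\phi(v)$ defines a linear functional $\mu\colon\{w\}_B^\perp\to K$ via $\phi(v)=\mu(v)w$. Since $B$ is non-degenerate on $\{w\}_B^\perp$, there is a unique $a\in\{w\}_B^\perp$ with $\mu(v)=B(a,v)$ for all $v\in\{w\}_B^\perp$; this is exactly the statement that $\phi$ and $\phi_{B,w,a}$ agree on $\{w\}_B^\perp$.

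Next I would consider $\psi=\phi-\phi_{B,w,a}$, which vanishes on $\{w\}_B^\perp$ and is therefore determined by the single value $\psi(w)$. The key point is that the second defining condition forces $\psi(w)\in\{w\}_B^\perp$: indeed $B(w,\psi(w))=B(w,\phi(w))-B(w,\phi_{B,w,a}(w))=0-B(a,w)B(w,w)=0$, using $B(w,\phi(w))=0$ and $B(a,w)=0$. Setting $b=B(w,w)^{-1}\psi(w)\in\{w\}_B^\perp$, the map $\phi_{B,b,w}$ also vanishes on $\{w\}_B^\perp$ and sends $w$ to $B(w,w)b=\psi(w)$, so $\psi=\phi_{B,b,w}$ by the decomposition $V=Kw\oplus\{w\}_B^\perp$. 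This yields $\phi=\phi_{B,w,a}+\phi_{B,b,w}$ with $a,b\in\{w\}_B^\perp$, as required.

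The hard part will not be the algebra but making sure that $a$ and $b$ can be taken inside $\{w\}_B^\perp$ rather than merely in $V$. For $a$ this relies on the non-degeneracy of the restricted form $B|_{\{w\}_B^\perp}$, and for $b$ it relies crucially on the second condition $B(w,\phi(w))=0$, which is exactly what makes $\psi(w)$ land in $\{w\}_B^\perp$. Everything else is bookkeeping with the direct sum decomposition of $V$.
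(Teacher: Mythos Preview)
Your proof is correct and follows essentially the same route as the paper's: both directions are argued identically, with the forward implication using $(\{w\}_B^\perp)^\perp=Kw$ to get a linear functional on $\{w\}_B^\perp$, representing it by some $a\in\{w\}_B^\perp$ via non-degeneracy of the restricted form (the paper phrases this as the Riesz representation theorem), and then setting $b=B(w,w)^{-1}\phi(w)$ (your $\psi(w)$ equals $\phi(w)$ since $\phi_{B,w,a}(w)=B(a,w)w=0$). The only cosmetic difference is your explicit introduction of $\psi=\phi-\phi_{B,w,a}$ as an intermediate object.
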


\begin{proof}
First note that if $\phi\in\text{End}_K(V)$ is of the above form then $\phi\in E^{(1)}(B,w)$, since for all $u,v\in\{w\}_B^{\perp}$ we have
\begin{align*}
    B(u, \phi(v)) &= B(u, \phi_{B,w,a}(v)) + B(u,\phi_{B,b,w}(v)) = 0, \\
    B(w, \phi(w)) &= B(w, \phi_{B,w,a}(w)) + B(w,\phi_{B,b,w}(w)) = 0.
\end{align*}

For the other direction, let $\phi \in E^{(1)}(B,w)$. Then for $v \in \{w\}_B^\perp$, we have that $\phi(v) \in \{w\}_B^{\perp \perp}=Kw$. So, there exists a linear map $f: \{w\}_B^\perp \rightarrow K$ such that $\phi(v)=f(v)w$ for all $v \in \{w\}_B^\perp$. Now, by the Riesz representation theorem there exists a vector $a \in \{w\}_B^\perp$ satisfying $f(v)=B(a,v)$ for all $v \in \{w\}_B^\perp$.  Thus $\phi(v) = \phi_{B,w,a}(v)$ for all $v\in\{w\}_B^{\perp}$.

To provide the necessary extension to $V$, let $b = \frac{1}{B(w,w)}\phi(w)\in V$ (well defined by non-degeneracy of $B$). Then $b \in \{w\}_B^\perp$, since $\phi \in E^{(1)}(B,w)$ and so
\begin{equation*}
    B(w,b) = \frac{1}{B(w,w)}B(w,\phi(w)) = 0.
\end{equation*}
It follows that \[\phi(w) = B(w,w)b = \phi_{B,b,w}(w),\] so that $\phi = \phi_{B,w,a}+\phi_{B,b,w}$.

\end{proof}

\subsection{The endomorphism superalgebra induced by $B$.}
\hfill\\

We are now ready to show that there is an orthogonal decomposition $\text{End}_K(V) = E^{(0)}(B,w)\oplus E^{(1)}(B,w)$, and that this defines a superalgebra structure on $\text{End}_K(V)$.

\begin{defn}
\label{def:bfend}
For $\phi_1, \phi_2\in\text{End}_K(V)$ we define: \[B(\phi_1, \phi_2) = \text{Tr}(\phi_1^\dagger\phi_2),\] where $\phi_1^{\dagger}$ is the adjoint of $\phi_1$ with respect to $B$.
\end{defn}

We check that this is a symmetric bilinear form. Bilinearity follows immediately since the adjoint and trace maps are $K$-linear. Symmetry follows since:
\begin{equation*}
    B(\phi_1, \phi_2) = \text{Tr}(\phi_1^\dagger\phi_2) = \text{Tr}(\phi_2\phi_1^\dagger)=\text{Tr}((\phi_2^\dagger\phi_1)^\dagger)= \text{Tr}(\phi_2^\dagger\phi_1) =  B(\phi_2,\phi_1).
\end{equation*}

The above is an analogue of the Frobenius inner product, but induced by $B$ instead of the standard bilinear form. Indeed, in matrix form we have $B(M_1, M_2) = \text{Tr}(\mathcal{B}^{-1}M_1^T\mathcal{B}M_2)$, and if $K\subseteq\mathbb{R}$ and the quadratic form corresponding to $B$ is positive definite then this bilinear form can be shown to be positive definite.

\begin{lemma}
\label{lemma:rewriteSV}
Let $i\in\{0,1\}$. If $\phi\in E^{(i)}(B,w)$ then $\phi^\dagger \in E^{(i)}(B,w)$. 
\end{lemma}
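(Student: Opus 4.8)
The plan is to verify directly that $\phi^\dagger$ satisfies the same defining conditions as $\phi$, relying on just two facts: the symmetry of $B$, and the defining relation of the adjoint, namely $B(\phi^\dagger(x), y) = B(x, \phi(y))$ for all $x, y \in V$. The argument splits into the two cases $i = 0$ and $i = 1$, but both are of the same flavour: one rewrites each defining equation for $\phi^\dagger$ using symmetry and the adjoint identity until it becomes a defining equation for $\phi$.

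For the case $i = 0$, I observe that the required computation was in fact already carried out inside the proof of Lemma \ref{lemma:equivSn}. There it is shown that $B(u, \phi^\dagger(w)) = B(\phi^\dagger(w), u) = B(w, \phi(u)) = 0$ and $B(w, \phi^\dagger(u)) = B(\phi^\dagger(u), w) = B(u, \phi(w)) = 0$ for all $u \in \{w\}_B^\perp$, which are precisely the two conditions defining $E^{(0)}(B,w)$. So for this case I would simply isolate and restate that computation.

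For the case $i = 1$, I would proceed analogously. Given $u, v \in \{w\}_B^\perp$, I rewrite $B(u, \phi^\dagger(v)) = B(\phi^\dagger(v), u) = B(v, \phi(u))$ using symmetry and then the adjoint relation; since $u, v \in \{w\}_B^\perp$ this vanishes by the definition of $E^{(1)}(B,w)$ (with the roles of $u$ and $v$ interchanged, which is harmless as both lie in $\{w\}_B^\perp$). Likewise $B(w, \phi^\dagger(w)) = B(\phi^\dagger(w), w) = B(w, \phi(w)) = 0$. Together these two facts show $\phi^\dagger \in E^{(1)}(B,w)$.

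The underlying point is that each defining condition is stable under passage to the adjoint: taking adjoints merely interchanges the two constraints cutting out $E^{(0)}(B,w)$, and reflects the single bilinear constraint cutting out $E^{(1)}(B,w)$ across its symmetric pair of arguments. There is no genuine obstacle here; the only care required is to track the symmetry of $B$ and to note that the vanishing condition defining $E^{(1)}(B,w)$ is symmetric in its two $\{w\}_B^\perp$-arguments, so that the swap of $u$ and $v$ introduced by the adjoint causes no difficulty.
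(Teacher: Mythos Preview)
Your proof is correct. For $i=0$ it coincides with the paper's, which likewise refers back to the computation inside Lemma~\ref{lemma:equivSn}. For $i=1$ the paper takes a different route: rather than verifying the defining conditions directly, it invokes the structural description of Lemma~\ref{lemma:equivVn} to write $\phi = \phi_{B,w,a} + \phi_{B,b,w}$, computes $\phi_{B,w,a}^{\dagger} = \phi_{B,a,w}$ and $\phi_{B,b,w}^{\dagger} = \phi_{B,w,b}$, and concludes that $\phi^{\dagger} = \phi_{B,a,w} + \phi_{B,w,b} \in E^{(1)}(B,w)$. Your direct verification is more elementary and avoids any dependence on Lemma~\ref{lemma:equivVn}; the paper's argument, on the other hand, extracts the extra information that passage to the adjoint swaps the parameters $a$ and $b$, a fact that is reused later (e.g.\ in the proof of Proposition~\ref{propn:decomp}).
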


\begin{proof}
We already saw the case $i=0$ in the proof of Lemma \ref{lemma:equivSn}. For the case $i=1$, write $\phi = \phi_{B,w,a}+\phi_{B,b,w}$ for some $a,b\in\{w\}_B^{\perp}$. 

Note that $\phi_{B,w,a}^{\dagger} = \phi_{B,a,w}$ since: \[B(\phi_{B,w,a}(x),y) = B(B(a,x)w,y) = B(a,x)B(w,y) = B(B(w,y)a,x) = B(\phi_{B,a,w}(y),x)\] for all $x,y\in V$. Similarly $\phi_{B,b,w}^{\dagger} = \phi_{B,w,b}$ and so it follows that: \[\phi^\dagger = \phi_{B,w,a}^\dagger+\phi_{B,b,w}^\dagger = \phi_{B,a,w}+\phi_{B,w,b},\] and we notice that $a,b$ have simply switched roles. Thus $\phi^\dagger\in E^{(1)}(B,w)$.
\end{proof}

\begin{lemma}
\label{lem:decomp}
There exists an orthogonal decomposition \[\text{End}_K(V) = E^{(0)}(B,w)\oplus E^{(1)}(B,w)\] with respect to the symmetric bilinear form in Definition \ref{def:bfend}.
\end{lemma}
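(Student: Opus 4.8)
The plan is to verify three things: that $E^{(0)}(B,w)$ and $E^{(1)}(B,w)$ are mutually orthogonal with respect to the form of Definition \ref{def:bfend}, that their intersection is trivial, and that their dimensions sum to $n^2 = \dim_K \text{End}_K(V)$. Together these give an orthogonal direct sum decomposition. Throughout I would use the splitting $V = Kw \oplus \{w\}_B^{\perp}$, valid since $B(w,w) \neq 0$ forces $w \notin \{w\}_B^{\perp}$, together with the structural descriptions of the two subspaces from Lemmas \ref{lemma:equivSn} and \ref{lemma:equivVn}.

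The key computational tool for orthogonality is a trace formula for the outer products $\phi_{B,u,v}$. Since $\psi \circ \phi_{B,u,v} = \phi_{B,\psi(u),v}$ for any $\psi \in \text{End}_K(V)$, and since the rank-one map $\phi_{B,p,q}$ has trace $B(p,q)$, one obtains $\text{Tr}(\psi \circ \phi_{B,u,v}) = B(\psi(u), v)$. Now take $\phi_0 \in E^{(0)}(B,w)$ and $\phi_1 \in E^{(1)}(B,w)$, and write $\phi_1 = \phi_{B,w,a} + \phi_{B,b,w}$ with $a, b \in \{w\}_B^{\perp}$ via Lemma \ref{lemma:equivVn}. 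Setting $\psi = \phi_0^{\dagger}$, which again lies in $E^{(0)}(B,w)$ by Lemma \ref{lemma:rewriteSV}, the formula gives
\[ B(\phi_0, \phi_1) = \text{Tr}(\phi_0^{\dagger}\phi_1) = B(\phi_0^{\dagger}(w), a) + B(\phi_0^{\dagger}(b), w). \]
The first term vanishes because $\phi_0^{\dagger}(w) \in Kw$ and $a \in \{w\}_B^{\perp}$, and the second vanishes because the defining condition $B(w, \phi_0^{\dagger}(b)) = 0$ holds for $b \in \{w\}_B^{\perp}$. Hence the two subspaces are orthogonal.

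For the remaining points I would argue directly. If $\phi \in E^{(0)}(B,w) \cap E^{(1)}(B,w)$ then $\phi(w) = \lambda w$ (from $E^{(0)}$) while $B(w, \phi(w)) = 0$ (from $E^{(1)}$) forces $\lambda B(w,w) = 0$, so $\lambda = 0$ and $\phi(w) = 0$; moreover, for $u \in \{w\}_B^{\perp}$ membership in $E^{(1)}$ gives $\phi(u) \in Kw$ while membership in $E^{(0)}$ gives $\phi(u) \in \{w\}_B^{\perp}$, and as $Kw \cap \{w\}_B^{\perp} = \{0\}$ we get $\phi(u) = 0$. Thus $\phi = 0$ and the intersection is trivial. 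For the dimension count, Lemma \ref{lemma:equivVn} exhibits $E^{(1)}(B,w)$ as the image of the map $(a,b) \mapsto \phi_{B,w,a} + \phi_{B,b,w}$ on $\{w\}_B^{\perp} \times \{w\}_B^{\perp}$; a short check (evaluating at $w$ and at vectors of $\{w\}_B^{\perp}$) shows this map is injective, so $\dim E^{(1)}(B,w) = 2(n-1)$. For $E^{(0)}(B,w)$, using $V = Kw \oplus \{w\}_B^{\perp}$, an element is determined by $\phi(w)$, constrained to lie in $Kw$ (one dimension), and by $\phi|_{\{w\}_B^{\perp}}$, constrained to map into $\{w\}_B^{\perp}$ (that is $(n-1)^2$ dimensions), giving $\dim E^{(0)}(B,w) = 1 + (n-1)^2 = n^2 - 2n + 2$. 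The two dimensions sum to $n^2$, which combined with trivial intersection yields the direct sum, and with orthogonality yields the orthogonal decomposition.

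The routine parts are the dimension bookkeeping; the step that needs genuine care is the orthogonality computation, and specifically getting the trace identity $\text{Tr}(\psi \circ \phi_{B,u,v}) = B(\psi(u),v)$ right and keeping track of which defining condition of $E^{(0)}(B,w)$ kills each term. Everything else follows from the structural lemmas already in hand.
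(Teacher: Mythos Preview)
Your proof is correct. The orthogonality computation is essentially the paper's: both write $\phi_1 = \phi_{B,w,a}+\phi_{B,b,w}$, use the trace identity for rank-one maps, and invoke Lemma~\ref{lemma:rewriteSV} to ensure $\phi_0^{\dagger}\in E^{(0)}(B,w)$ so that each summand vanishes.

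Where you differ is in passing from orthogonality to the full decomposition. The paper asserts the dimensions $\dim E^{(0)}(B,w)=n^2-2n+2$ and $\dim E^{(1)}(B,w)=2n-2$ and then uses non-degeneracy of the induced form on $\text{End}_K(V)$ to conclude $E^{(0)}(B,w)^{\perp}=E^{(1)}(B,w)$. You instead supply explicit dimension counts (via the block description of $E^{(0)}$ and injectivity of $(a,b)\mapsto\phi_{B,w,a}+\phi_{B,b,w}$) and a direct trivial-intersection argument, so you never need the induced form to be non-degenerate. This makes your argument slightly more self-contained, since the paper does not actually verify non-degeneracy of the form in Definition~\ref{def:bfend}; on the other hand the paper's route is shorter once that non-degeneracy is taken for granted.
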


\begin{proof}
Let $\phi_1 \in E^{(0)}(B,w)$ and $\phi_2 \in E^{(1)}(B,w)$. Choose $a,b\in\{w\}_B^{\perp}$ such that $\phi_2 = \phi_{B,w,a}+\phi_{B,b,w}$. Then:
\begin{align*}
    B(\phi_1, \phi_2) = \text{Tr}(\phi_1^\dagger\phi_2)&= \text{Tr}(\phi_1^\dagger \phi_{B,w,a}) + \text{Tr}(\phi_1^\dagger \phi_{B,b,w}) \\
    &= B(w,\phi_1^\dagger(a)) + B(b,\phi_1^\dagger(w))\\
    &= 0
\end{align*}
since $\phi_1^\dagger\in E^{(0)}(B,w)$ (by Lemma \ref{lemma:rewriteSV}). It follows that $E^{(0)}(B,w)$ and $E^{(1)}(B,w)$ are orthogonal. 

Since $B$ is non-degenerate, $\text{dim}(E^{(0)}(B,w)) = n^2-2n+2$ and $\text{dim}(E^{(1)}(B,w)) = 2n-2$, we see that $E^{(0)}(B,w)^{\perp} = E^{(1)}(B,w)$ and so the decomposition follows.
\end{proof}

We are now in a position to prove the main result. We will do this in a way that also highlights certain extra features of the decomposition (e.g.\ multiplicativity of the weight).
\begin{theorem}
\label{thm:superalg}
The decomposition in Lemma \ref{lem:decomp} defines a superalgebra structure on $\text{End}_K(V)$, i.e. $E^{(i)}(B,w)E^{(j)}(B,w) \subseteq E^{(i+j \bmod 2)}(B,w)$ for all $i,j\in\{0,1\}$.
\end{theorem}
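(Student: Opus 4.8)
The plan is to replace the defining conditions of $E^{(0)}(B,w)$ and $E^{(1)}(B,w)$ by a description in terms of the orthogonal splitting $V = L\oplus H$, where $L = Kw$ and $H = \{w\}_B^\perp$. This is a genuine internal direct sum: since $B$ is non-degenerate and $B(w,w)\neq 0$, we have $L\cap H = 0$ and hence $V = L\oplus H$, with $H^\perp = L$ and $L^\perp = H$. Regarding $V$ as a $\mathbb{Z}/2\mathbb{Z}$-graded space with even part $L$ and odd part $H$, I claim the two subspaces are exactly the even and odd parts of $\text{End}_K(V)$ for the induced grading, namely
\[
E^{(0)}(B,w) = \{\phi : \phi(L)\subseteq L,\ \phi(H)\subseteq H\}, \qquad E^{(1)}(B,w) = \{\phi : \phi(L)\subseteq H,\ \phi(H)\subseteq L\}.
\]

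First I would establish these two reformulations; they are little more than a repackaging of Lemmas \ref{lemma:equivSn} and \ref{lemma:equivVn}. For $E^{(0)}$, the condition $B(u,\phi(w)) = 0$ for all $u\in H$ says $\phi(w)\in H^\perp = L$, while $B(w,\phi(u)) = 0$ for all $u\in H$ says $\phi(u)\in H$; the reverse implication is a direct substitution into the defining conditions. For $E^{(1)}$, Lemma \ref{lemma:equivVn} writes a general element as $\phi_{B,w,a}+\phi_{B,b,w}$ with $a,b\in H$, and since $\phi_{B,w,a}$ carries $H$ into $L$ and kills $w$, while $\phi_{B,b,w}$ carries $w$ into $H$ and kills $H$, this is precisely the off-diagonal condition (the converse again being an immediate check).

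With the reformulations in hand, the four inclusions reduce to composing block maps and then reading off membership via the reverse implications above. If $\phi_1,\phi_2$ are both diagonal then $\phi_1\phi_2$ preserves $L$ and $H$, giving $E^{(0)}E^{(0)}\subseteq E^{(0)}$; if one is diagonal and one off-diagonal then $\phi_1\phi_2$ interchanges $L$ and $H$, giving $E^{(0)}E^{(1)},\, E^{(1)}E^{(0)}\subseteq E^{(1)}$; and if both are off-diagonal then $\phi_1\phi_2$ again preserves $L$ and $H$ (each summand is sent by $\phi_2$ into the other and then back by $\phi_1$), giving $E^{(1)}E^{(1)}\subseteq E^{(0)}$. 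The multiplicativity of the weight drops out of the first case: if $\phi_i(w) = \lambda_i w$ then $\phi_1\phi_2(w) = \lambda_1\lambda_2 w$, so $\text{wt}(\phi_1\phi_2)_{B,w} = \text{wt}(\phi_1)_{B,w}\,\text{wt}(\phi_2)_{B,w}$.

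I expect the only genuinely delicate point to be the $E^{(1)}E^{(1)}\subseteq E^{(0)}$ inclusion, and specifically the verification that the product lands in $E^{(0)}$. Approached naively through Lemma \ref{lemma:equivSn} this would seem to require exhibiting a single scalar realizing both defining equations (i.e.\ checking that $\phi_1\phi_2$ and its adjoint scale $w$ by the same constant), which is fiddly bookkeeping. The block reformulation is precisely what removes this difficulty: membership in $E^{(0)}$ is recast as invariance of the two subspaces $L$ and $H$, a condition manifestly stable under composition, so no separate adjoint computation is needed.
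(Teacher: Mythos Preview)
Your proof is correct and takes a genuinely different route from the paper's. Your key move is to recognise that, once one has the orthogonal splitting $V = L \oplus H$ with $L = Kw$ and $H = \{w\}_B^\perp$, the subspaces $E^{(0)}(B,w)$ and $E^{(1)}(B,w)$ are precisely the block-diagonal and block-off-diagonal parts of $\text{End}_K(V)$; the superalgebra axioms then follow tautologically from composition of block maps. The paper instead verifies each of the four inclusions by direct calculation with the defining bilinear-form conditions and the adjoint, working out products like $(\phi_3\phi_4)(w)$ explicitly in terms of the data $a_i, b_i$. Your approach is cleaner and makes it transparent that this is the standard grading on $\text{End}$ induced by a $\mathbb{Z}/2\mathbb{Z}$-grading on $V$; the paper's computations, though longer, extract a little extra information along the way (for instance the exact weight $B(a_1,b_2)B(w,w)$ of a product of two odd elements) that your argument does not supply but that the theorem does not require.
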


\begin{proof}
First let $\phi_1,\phi_2\in E^{(0)}(B,w)$ have weights $\lambda_1, \lambda_2\in K$ respectively. Then $\phi_1\phi_2\in E^{(0)}(B,w)$ has weight $\lambda_1\lambda_2$ since:
\begin{align*}
    B(\cdot. (\phi_1\phi_2)(w)) = B(\phi_1^\dagger(\cdot),\phi_2(w)) = \lambda_2 B(\phi_1^\dagger(\cdot), w) = \lambda_1\lambda_2 B(\cdot, w), \\
    B(w, (\phi_1\phi_2)(\cdot)) = B(\phi_1^\dagger(w),\phi_2(\cdot)) = \lambda_1 B(w,\phi_2(\cdot)) = \lambda_1\lambda_2 B(w,\cdot).
\end{align*}
Now let $\phi_3,\phi_4\in E^{(1)}(B,w)$ and suppose that $a_1, b_1, a_2, b_2\in \{w\}_B^{\perp}$ are such that \begin{align*}\phi_3 &= \phi_{B,w,a_1} + \phi_{B,b_1,w},\\ \phi_4 &= \phi_{B,w,a_2} + \phi_{B,b_2,w}.\end{align*} 

From this one checks that $(\phi_3\phi_4)(w) = (\phi_4^\dagger\phi_3^\dagger)(w) = B(a_1,b_2)B(w,w) w$, and so:
\begin{align*}
    B(\cdot, (\phi_3\phi_4)(w)) &= B(a_1,b_2)B(w,w) B(\cdot,w),\\
    B(w,(\phi_3\phi_4)(\cdot)) &= B((\phi_4^\dagger\phi_3^\dagger)(w),\cdot) = B(a_1,b_2)B(w,w)B(w,\cdot)
\end{align*}

Since $B(a_1,b_2)B(w,w)\neq 0$ we see that $\phi_3\phi_4\in E^{(0)}(B,w)$ has weight $B(a_1,b_2)B(w,w)$.

We now show that $\phi_1\phi_3$ and $\phi_3\phi_1$ lie in $E^{(1)}(B,w)$. First note that:
\begin{align*}
    B(w,(\phi_1\phi_3)(w)) &= \lambda_1 B(w,\phi_3(w)) = 0,\\
    B(w,(\phi_3\phi_1)(w)) &= B(\phi_3^\dagger(w),\phi_1(w)) = \lambda_1 B(\phi_3^\dagger(w),w) = 0.
\end{align*}

Finally, if $u,v\in\{w\}_B^{\perp}$, we have:
\begin{align*}
    B(u,(\phi_1\phi_3)(v)) &= B(a_1,v)B(u,\phi_1(w)) = \lambda_1 B(a_1,v)B(u,w) = 0, \\
    B(u,(\phi_3\phi_1)(v)) &= B((\phi_1^\dagger\phi_3^\dagger)(u),v) = B(b_1,u)B(\phi_1^\dagger(w),v) = \lambda_1 B(b_1,u)B(w,v) = 0.
\end{align*}
\end{proof}

Note that the even and odd pieces of this superalgebra decomposition have different dimensions if $n\geq 3$. This is in contrast with the Clifford algebra coming from the corresponding quadratic form on $V$. This decomposition is also not equivalent to ones given by symmetric and antisymmetric matrices in general (e.g.\ if $n\neq 4$ then we cannot have $(n^2-2n+2, 2n-2) = \left(\frac{n(n+1)}{2},\frac{n(n-1)}{2}\right)$).

We pause to give a simple example of the above superalgebra decomposition.

\begin{eg}
Let $n=3$ and $K = \mathbb{Q}$. Choose the standard basis for $V = \mathbb{Q}^3$. The following symmetric bilinear form on $V$ is non-degenerate: \[B(u,v) = u_1v_1 + 2u_2v_2 + 3u_3v_3.\]

For the choice $w = (1,0,0)$ we get the superalgebra decomposition $\text{End}_K(V)\cong M_3(\mathbb{Q}) = E^{(0)}(B,w)\oplus E^{(1)}(B,w)$ with: \begin{align*}E^{(0)}(B,w) &= \left\{\begin{pmatrix}a & 0 & 0\\ 0 & e & f\\ 0 & h & i\end{pmatrix}\,\Bigg|\, a,e,f,h,i\in\mathbb{Q}\right\},\\ E^{(1)}(B,w) &= \left\{\begin{pmatrix}0 & b & c\\ d & 0 & 0\\ g & 0 & 0\end{pmatrix}\,\Bigg|\, b,c,d,g\in\mathbb{Q}\right\}.\end{align*} However, for the choice $w=(1,1,1)$ we get the very different looking superalgebra decomposition with components: \begin{align*}E^{(0)}(B,w) &= \left\{\begin{pmatrix}a & b & c\\ d & e & a + b + c - d - e\\ \frac{b+c-2d}{3} & \frac{2a + b + 2c - 2e}{3} & \frac{a + b + 2d + 2e}{3}\end{pmatrix}\,\Bigg|\, a,b,c,d,e\in\mathbb{Q} \right\}, \\ E^{(1)}(B,w) &= \left\{\begin{pmatrix}f & g& h\\ i & -2f + g + 2i & -3f + h + 3i \\ \frac{4f - g - h - 2i}{3} & \frac{2f + g - 2h - 4i}{3} & f - g - 2i\end{pmatrix}\,\Bigg|\, f,g,h,i\in\mathbb{Q}\right\}.\end{align*}
\end{eg}

A natural question to ask in general is whether the above recipe produces isomorphic superalgebras as $(B,w)$ vary. The theorem below addresses this question. First recall that if $\phi_1, \phi_2\in\text{End}_K(V)$ then we can construct a new symmetric bilinear form on $V$ via \[B_{\phi_1,\phi_2}(x,y) = \frac{1}{2}(B(\phi_1(x),\phi_2(y))+B(\phi_1(y),\phi_2(x))).\] When $\phi_1=\phi_2=\phi$ we will simply write $B_{\phi}$.

\begin{theorem}
Suppose that $\phi\in\text{GL}(V)$. Then conjugation by $\phi$ gives a superalgebra isomorphism: \[E^{(0)}(B,w)\oplus E^{(1)}(B,w) \cong E^{(0)}(B_{\phi},\phi^{-1}(w))\oplus E^{(1)}(B_{\phi},\phi^{-1}(w)).\]
\end{theorem}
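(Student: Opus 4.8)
The plan is to take the conjugation map to be $C_\phi\colon \psi \mapsto \phi^{-1}\psi\phi$, which is a $K$-algebra automorphism of $\text{End}_K(V)$, and to show that it carries each graded piece $E^{(i)}(B,w)$ isomorphically onto $E^{(i)}(B_\phi,\phi^{-1}(w))$. Writing $B' = B_\phi$ and $w' = \phi^{-1}(w)$ for brevity, I would first record the two preliminary facts that make the data on the right-hand side well defined: $B'$ is again non-degenerate (since $\phi\in\text{GL}(V)$) and $w'\neq 0$ (since $\phi^{-1}$ is injective). The crucial computational identity is that $B'(x,w') = B(\phi(x),\phi(w')) = B(\phi(x),w)$ for all $x\in V$, which immediately gives $\{w'\}_{B'}^\perp = \phi^{-1}(\{w\}_B^\perp)$. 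This identity is exactly what forces the choice $w' = \phi^{-1}(w)$: it is what makes the factors of $\phi$ cancel in the verifications below.

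Next I would verify the two inclusions $C_\phi(E^{(i)}(B,w)) \subseteq E^{(i)}(B',w')$ by unwinding the definitions. For $\psi\in E^{(0)}(B,w)$ and $u\in\{w'\}_{B'}^\perp$, using $(\phi^{-1}\psi\phi)(w') = \phi^{-1}\psi(w)$ together with $B'(x,y) = B(\phi(x),\phi(y))$, one finds
\[
B'\bigl(u,(\phi^{-1}\psi\phi)(w')\bigr) = B(\phi(u),\psi(w)), \qquad B'\bigl(w',(\phi^{-1}\psi\phi)(u)\bigr) = B(w,\psi(\phi(u))),
\]
both of which vanish because $\phi(u)\in\{w\}_B^\perp$ and $\psi\in E^{(0)}(B,w)$. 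The same bookkeeping handles $E^{(1)}$: for $\psi\in E^{(1)}(B,w)$ and $u,v\in\{w'\}_{B'}^\perp$ one gets $B'(u,(\phi^{-1}\psi\phi)(v)) = B(\phi(u),\psi(\phi(v)))$ and $B'(w',(\phi^{-1}\psi\phi)(w')) = B(w,\psi(w))$, again zero since $\phi(u),\phi(v)\in\{w\}_B^\perp$. Thus $C_\phi$ respects the grading.

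Finally I would argue bijectivity. The cleanest route is to apply the inclusions just proved, but with $\phi$ replaced by $\phi^{-1}$ and base data $(B',w')$: a direct check gives $(B')_{\phi^{-1}} = B$ and $(\phi^{-1})^{-1}(w') = \phi(w') = w$, so $C_{\phi^{-1}} = (C_\phi)^{-1}$ sends $E^{(i)}(B',w')$ into $E^{(i)}(B,w)$. Hence $C_\phi$ restricts to a bijection $E^{(i)}(B,w)\xrightarrow{\sim} E^{(i)}(B',w')$ for each $i$ (alternatively this follows from a dimension count, since $\dim E^{(0)}$ and $\dim E^{(1)}$ depend only on $n$). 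As $C_\phi$ is a ring homomorphism that respects the decomposition, it is a superalgebra isomorphism, as claimed. I expect the only genuine subtlety to be fixing the correct direction of conjugation: the choice $w' = \phi^{-1}(w)$ pairs precisely with $C_\phi = \phi^{-1}(\cdot)\phi$, whereas the opposite convention $\psi\mapsto\phi\psi\phi^{-1}$ would introduce stray factors of $\phi^{-2}$ and fail to cancel. Once that is fixed, the work is just careful tracking of these cancellations rather than any real difficulty.
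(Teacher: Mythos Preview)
Your proof is correct and follows essentially the same approach as the paper: both take the conjugation map $\psi\mapsto\phi^{-1}\psi\phi$, verify that it sends $E^{(i)}(B,w)$ into $E^{(i)}(B_\phi,\phi^{-1}(w))$ by unwinding the definitions (the paper routes the $E^{(0)}$ case through the $\lambda$-characterization of Lemma~\ref{lemma:equivSn}, you use the defining conditions directly), and conclude. You are slightly more explicit than the paper in arguing bijectivity on each graded piece via the inverse $C_{\phi^{-1}}$ or a dimension count, which the paper leaves implicit.
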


\begin{proof}
We first note that the map $\psi\mapsto \phi^{-1}\psi\phi$ clearly defines an algebra isomorphism $\text{End}_K(V)\rightarrow \text{End}_K(V)$. It then suffices to prove that this map preserves the grading.

Suppose that $\psi\in E^{(0)}(B,w)$. Then $\phi^{-1}\psi\phi\in E^{(0)}(B_{\phi},\phi^{-1}(w))$ since for any $u\in \{\phi^{-1}(w)\}^{\perp}_{B_{\phi}}$ we have (using Lemma \ref{lemma:equivSn}): \begin{align*}B_{\phi}(u, (\phi^{-1}\psi\phi)(\phi^{-1}(w))) &= B(\phi(u), \psi(w)) = \lambda B(\phi(u),w) = \lambda B_{\phi}(u,\phi^{-1}(w)) = 0,\\ B_{\phi}((\phi^{-1}\psi\phi)(u),\phi^{-1}(w)) &= B(\psi(\phi(u)), w) = \lambda B(\phi(u), w) = \lambda B_{\phi}(u, \phi^{-1}(w)) = 0.\end{align*}

Similarly, if $\psi\in E^{(1)}(B,w)$ then $\phi^{-1}\psi\phi\in E^{(1)}(B_{\phi},\phi^{-1}(w))$ since for any $u,v\in \{\phi^{-1}(w)\}^{\perp}_{B_{\phi}}$ we have: \begin{align*}B_{\phi}(u, (\phi^{-1}\psi\phi)(v)) &= B(\phi(u), \psi(\phi(v))) = 0,\\ B_{\phi}(\phi^{-1}(w), (\phi^{-1}\psi\phi)(\phi^{-1}(w)) &= B(w, \psi(w)) = 0\end{align*} (noting that $u,v\in \{\phi^{-1}(w)\}^{\perp}_{B_{\phi}}$ implies that $\phi(u),\phi(v)\in \{w\}^{\perp}_{B}$). 

\end{proof}

\section{Further results} 

In this section we study some simple consequences of the superalgebra decomposition. In particular, we will see how it can be used to deduce information about the corresponding isometry problem for the non-degenerate symmetric bilinear space $(V,B)$. 

To recall, the isometry problem for $(V,B)$ is the problem of detecting whether a given non-degenerate symmetric bilinear form $B'$ on $V$ can be written as $B' = B_{\phi}$ for some $\phi\in\text{GL}(V)$ (and further to construct such a map $\phi$ if it exists).

First, we consider the decomposition of $B_{\phi}$ for an arbitrary $\phi\in\text{End}_K(V)$, induced by the decomposition of $\phi$ given by Lemma \ref{lem:decomp}.

\begin{propn}
\label{propn:expansion}
Let $\phi\in\text{End}_K(V)$ have decomposition \[\phi = \phi_0 + \frac{\text{wt}(\phi)_{B,w}}{B(w,w)}
\phi_{B,w,w}+ \phi_{B,w,a}+\phi_{B,b,w},\] for some $a,b\in\{w\}_B^{\perp}$ and $\phi_0\in E^{(0)}(B,w)$ of weight $0$. Then \[B_{\phi} = B_{\phi_0} + \frac{\text{wt}(\phi)_{B,w}^2}{B(w,w)^2} B_{\phi_{B,w,w}} + B_{
\phi_{B,w,a}} + B_{\phi_{B,b,w}} + 2\left(B_{\phi_0,\phi_{B,b,w}} + \frac{\text{wt}(\phi)_{B,w}}{B(w,w)}B_{\phi_{B,w,w},\phi_{B,w,a}}\right).\]
\end{propn}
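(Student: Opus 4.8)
The plan is to expand $B_\phi$ by bilinearity from the four-term decomposition of $\phi$, and then show that four of the six resulting cross-terms vanish. Write $\phi = \psi_1+\psi_2+\psi_3+\psi_4$ with $\psi_1 = \phi_0$, $\psi_2 = \frac{\text{wt}(\phi)_{B,w}}{B(w,w)}\phi_{B,w,w}$, $\psi_3 = \phi_{B,w,a}$ and $\psi_4 = \phi_{B,b,w}$. Since $B_\phi(x,y) = B(\phi(x),\phi(y))$ and $B$ is symmetric and bilinear, the polarization identity yields
\[ B_\phi = \sum_{i=1}^4 B_{\psi_i} + 2\sum_{i<j} B_{\psi_i,\psi_j}, \]
where I use that $B_{\psi_i,\psi_i} = B_{\psi_i}$, that $B_{\psi_i,\psi_j} = B_{\psi_j,\psi_i}$ (both immediate from the symmetry of $B$), together with $B_{c\psi} = c^2 B_\psi$ and $B_{c\psi_i,\psi_j} = c\,B_{\psi_i,\psi_j}$. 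The four diagonal terms $B_{\psi_i}$ already give the first four summands on the right-hand side, the scalar in $\psi_2$ emerging squared.

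It then remains to evaluate the six cross-terms $B_{\psi_i,\psi_j}$, $i<j$. First I would record the two elementary facts that drive everything: since $\phi_0$ has weight $0$, Lemma \ref{lemma:equivSn} gives $B(w,\phi_0(\cdot)) = 0$, hence $B(\phi_0(\cdot),w)=0$ by symmetry; and since $b\in\{w\}_B^{\perp}$ we have $B(w,b)=0$. Each cross-term has the symmetrized shape $B_{\psi_i,\psi_j}(x,y) = \tfrac12\bigl(B(\psi_i(x),\psi_j(y)) + B(\psi_i(y),\psi_j(x))\bigr)$, and for the relevant pairs each product collapses: $\phi_{B,w,w}$ and $\phi_{B,w,a}$ both have image in $Kw$, while $\phi_{B,b,w}$ produces the vector $b$ paired against $w$.

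Carrying out these evaluations, I expect exactly four vanishings. The terms $B_{\phi_0,\phi_{B,w,w}}$ and $B_{\phi_0,\phi_{B,w,a}}$ vanish because $\phi_{B,w,w}(y)$ and $\phi_{B,w,a}(y)$ lie in $Kw$, forcing out the factor $B(\phi_0(\cdot),w)=0$; the terms $B_{\phi_{B,w,w},\phi_{B,b,w}}$ and $B_{\phi_{B,w,a},\phi_{B,b,w}}$ vanish because each product pairs $w$ against $b$, forcing out the factor $B(w,b)=0$. The two survivors are $2B_{\phi_0,\phi_{B,b,w}}$ (the pair $\psi_1,\psi_4$) and $2\tfrac{\text{wt}(\phi)_{B,w}}{B(w,w)}B_{\phi_{B,w,w},\phi_{B,w,a}}$ (the pair $\psi_2,\psi_3$, the scalar pulled linearly out of $\psi_2$), and these are precisely the two terms inside the bracket on the right-hand side. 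Substituting them back completes the proof.

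The computation is mechanical once the polarization identity is in place; the only genuine content — and the one place to stay careful — is the bookkeeping of which of the six cross-terms survive, which is dictated entirely by the two orthogonality relations $B(\phi_0(\cdot),w)=0$ and $B(w,b)=0$.
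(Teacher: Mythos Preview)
Your proof is correct and follows essentially the same approach as the paper: expand $B_\phi$ bilinearly into four diagonal terms and six cross-terms, then kill four of the cross-terms using exactly the two facts you isolate --- that $\phi_0$ has weight $0$ (so $B(\phi_0(\cdot),w)=0$) and that $b\in\{w\}_B^{\perp}$. If anything, your bookkeeping is slightly more explicit than the paper's, which simply writes out the ten-term expansion and asserts which four terms vanish and why.
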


\begin{proof}
Writing $\phi$ as above and expanding gives \begin{align*}B_{\phi} &= B_{\phi_0} + \frac{\text{wt}(\phi)_{B,w}^2}{B(w,w)^2} B_{\phi_{B,w,w}} + B_{\phi_{B,w,a}} + B_{\phi_{B,b,w}}\\ &+2\frac{\text{wt}(\phi)_{B,w}}{B(w,w)}( B_{\phi_0,\phi_{B,w,w}} + B_{\phi_{B,w,w},\phi_{B,w,a}}+B_{\phi_{B,w,w},\phi_{B,b,w}})\\ &+ 2(B_{\phi_0,\phi_{B,w,a}} + B_{\phi_0,\phi_{B,b,w}} + B_{\phi_{B,w,a},\phi_{B,b,w}}).\end{align*}

\noindent The seventh and tenth terms vanish since $a,b\in\{w\}_B^{\perp}$, whereas the fifth and eighth terms vanish since $\phi_0\in E^{(0)}(B,w)$ has weight $0$. \end{proof}

The above Lemma allows us to either extract information about potential isometries $\phi$ satisfying the equality $B' = B_{\phi}$, or to give potential obstructions to their existence.

\begin{theorem}
\label{thm:wt}
Let $B'$ be an arbitrary non-degenerate symmetric bilinear form on $V$. If $B' = B_{\phi}$ for some $\phi\in\text{End}_K(V)$ then the following holds for any $w\in V\backslash\{0\}$ and $z_1, z_2\in\{w\}_B^{\perp}$:
\begin{align*}
    B'(w,w) &= B(w,w)(\text{wt}(\phi)_{B,w}^2 + B(w,w)B(b,b)),\\
    B'(w,z_1) &= B(w,w)(\text{wt}(\phi)_{B,w}B(a,z_1)+B(b,\phi_0(z_1))),\\
    B'(z_1,z_2) &= B(\phi_0(z_1), \phi_0(z_2)) + B(a,z_1)B(a,z_2)B(w,w).
\end{align*}
Here $a,b\in\{w\}_B^{\perp}$ are as in Proposition \ref{propn:expansion}.
\end{theorem}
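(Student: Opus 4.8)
The plan is to bypass the full expansion in Proposition \ref{propn:expansion} and instead exploit the fact that, since $B$ is symmetric, $B_{\phi}(x,y)=B(\phi(x),\phi(y))$ directly. Thus all three identities reduce to computing the two single vectors $\phi(w)$ and $\phi(z)$ (for $z\in\{w\}_B^{\perp}$) and then pairing them, which is far shorter than evaluating the six surviving summands of the proposition at each of the three pairs. One could equally substitute $x,y\in\{w,z_1,z_2\}$ into the expansion of $B_{\phi}$ from Proposition \ref{propn:expansion}; the bookkeeping is identical but lengthier.

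Write $\lambda=\text{wt}(\phi)_{B,w}$. I would first record the facts that make everything collapse: since $a,b\in\{w\}_B^{\perp}$ we have $B(w,a)=B(w,b)=0$; since $z_1,z_2\in\{w\}_B^{\perp}$ we have $B(w,z_i)=0$; and since $\phi_0\in E^{(0)}(B,w)$ has weight $0$, Lemma \ref{lemma:equivSn} gives both $\phi_0(w)=0$ and $B(w,\phi_0(x))=0$ for every $x\in V$ (i.e.\ $\phi_0$ maps into $\{w\}_B^{\perp}$). Feeding the decomposition of $\phi$ into the definition $\phi_{B,u,v}(x)=B(v,x)u$ and using these orthogonalities then yields the two clean formulas
\begin{align*}
\phi(w) &= \lambda w + B(w,w)\,b, \\
\phi(z) &= \phi_0(z) + B(a,z)\,w \qquad (z\in\{w\}_B^{\perp}),
\end{align*}
where in the first line the $\phi_0(w)$ and $B(a,w)$ contributions drop out, and in the second the two $B(w,z)$ contributions drop out.

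It then remains to substitute these into $B'(w,w)=B(\phi(w),\phi(w))$, $B'(w,z_1)=B(\phi(w),\phi(z_1))$ and $B'(z_1,z_2)=B(\phi(z_1),\phi(z_2))$ and expand bilinearly. In each expansion the cross terms involving $B(w,b)$ or $B(w,\phi_0(z_i))$ vanish by the orthogonalities above; what survives is exactly $B(w,w)(\lambda^2+B(w,w)B(b,b))$, then $B(w,w)(\lambda B(a,z_1)+B(b,\phi_0(z_1)))$, and finally $B(\phi_0(z_1),\phi_0(z_2))+B(a,z_1)B(a,z_2)B(w,w)$, matching the three stated equations.

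There is no genuine obstacle here: the result is a direct substitution once the images $\phi(w)$ and $\phi(z)$ are in hand. The only point requiring care is the bookkeeping of which terms vanish --- in particular remembering that $\phi_0$ both annihilates $w$ and lands in $\{w\}_B^{\perp}$, which is precisely what removes the remaining cross terms in the $(w,z_1)$ and $(z_1,z_2)$ pairings. Keeping $\lambda$, $a$, $b$ and $\phi_0$ in their symbolic (coordinate-free) form throughout avoids any need to expand in a basis.
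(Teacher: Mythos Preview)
Your proof is correct and takes a somewhat more direct route than the paper's. The paper invokes Proposition~\ref{propn:expansion} to write $B_{\phi}$ as a sum of six surviving bilinear forms and then evaluates that expansion at the pairs $(w,w)$, $(w,z_1)$, $(z_1,z_2)$. You instead compute the two images $\phi(w)=\lambda w+B(w,w)b$ and $\phi(z)=\phi_0(z)+B(a,z)w$ once and for all, and then pair them directly via $B'(x,y)=B(\phi(x),\phi(y))$. Both arguments rest on exactly the same orthogonality facts (that $a,b,z_i\perp w$, that $\phi_0(w)=0$, and that $\phi_0$ lands in $\{w\}_B^{\perp}$), so there is no new idea involved; your version simply avoids the intermediate step of first expanding $B_{\phi}$ into ten terms and arguing that four of them vanish. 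The gain is a shorter, self-contained computation that does not require Proposition~\ref{propn:expansion} at all; the paper's approach, on the other hand, makes the decomposition of $B_{\phi}$ itself explicit, which is of independent interest elsewhere in the paper.
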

\begin{proof}
If $B' = B_{\phi}$ then Proposition \ref{propn:expansion} tells us that: \[B' = B_{\phi_0} + \frac{\text{wt}(\phi)_{B,w}^2}{B(w,w)^2} B_{\phi_{B,w,w}} + B_{
\phi_{B,w,a}} + B_{\phi_{B,b,w}} + 2\left(B_{\phi_0,\phi_{B,b,w}} + \frac{\text{wt}(\phi)_{B,w}}{B(w,w)}B_{\phi_{B,w,w},\phi_{B,w,a}}\right).\]

The claim follows by evaluating both sides at each pair of vectors, recalling that $\phi_0$ has weight $0$ (so that $B(\phi_0(w),x) = B(w,\phi_0(x)) = 0$ for any $x\in V$) and that $a,b, z_1, z_2$ are orthogonal to $w$.
\end{proof}

In the special case of $B$ being the standard symmetric bilinear form and $w=(1,1,...,1)$, we find that equation one of Theorem \ref{thm:wt} is equivalent to equation $(18)$ of \cite{lettington} and equation $(3.1)$ of \cite{higham} (both of which proved useful in determining integer matrix factorizations). Theorem \ref{thm:wt} allows us to consider the case of arbitrary non-degenerate symmetric bilinear form, and fills in the ``missing equations" that must be solved to determine an isometry $\phi$ (if it exists).

\begin{remark}
    When $K\subseteq \mathbb{R}$ and $B'$ is an inner product then we can apply the Cauchy-Schwarz inequality to give following constraint for any $z\in\{w\}_B^{\perp}$:
    \[|B'(w,z)|^2 \leq B'(w,w)B'(z,z).\] This implies a corresponding inequality between the quantities on the RHS in Theorem \ref{thm:wt}.
\end{remark}

\section{Application to integer matrix factorization}

In this section we let $K = \mathbb{Q}$ and fix a lattice $\Lambda\subset V$ (i.e.\ a $\mathbb{Z}$-submodule satisfying $\mathbb{Q}\Lambda = V$). Further, we assume that $B(v,v')\in\mathbb{Z}$ for all $v,v'\in\Lambda$, so that $\Lambda$ is an integral lattice with respect to $B$. In particular, this implies that $\Lambda\subseteq\Lambda^* = \{v\in V\,|\, B(u,v)\subseteq\mathbb{Z}, \forall u\in\Lambda\}$.

We are interested in studying the isometry problem for $(\Lambda,B)$. This is the problem of detecting whether a given non-degenerate symmetric bilinear form $B'$ on $V$ can be written as $B' = B_{\phi}$ for some $\phi\in\text{GL}(\Lambda)=\{\phi\in\text{GL}(V)\,|\,\phi(\Lambda) = \Lambda\}$ (and further to construct such a map if it exists). Fixing a basis for $\Lambda$, this is equivalent to giving an integer matrix factorization of the form $\mathcal{B}' = M^T\mathcal{B}M$ for some $M\in\text{GL}_n(\mathbb{Z})$ (where $\mathcal{B},\mathcal{B}'$ are Gram matrices of $B, B'$ respectively, with respect to this basis for $\Lambda$). 

Such problems have been studied in number theory for a long time. For example, in the case $K = \mathbb{Q}$ and $\mathcal{B} = I_n$ (i.e.\ the standard symmetric bilinear form on $\Lambda = \mathbb{Z}^n \subset V = \mathbb{Q}^n$), it is a well known theorem (e.g.\ see the paper of Mordell \cite{mordell}) that such a factorization is always possible if $n\leq 7$, $\text{det}(\mathcal{B}') = 1$ and $\mathcal{B}'$ is positive definite. In other words, every symmetric positive definite matrix $\mathcal{B}'\in\text{SL}_n(\mathbb{Z})$ has a factorization $\mathcal{B}' = M^TM$ for some $M\in\text{GL}_n(\mathbb{Z})$ whenever $n\leq 7$. In dimension $n=8$ there may not be such a factorization. Indeed, any Gram matrix coming from a basis of the $E_8$ lattice (under the usual Euclidean inner product) provides a counter-example. As the rank $n$ grows, the number of counter-examples grows rapidly. 

As mentioned previously, in the papers \cite{higham} and \cite{lettington}, the authors utilised a superalgebra decomposition of $M_n(\mathbb{Q})$ in order to provide further understanding of this special case of the isometry problem. In doing so, they were able to provide a quadratic form obstruction towards the existence of such isometries (e.g.\ see Theorem $3.1$ of \cite{higham}). When such an isometry does exists, information about the isometry is learned from the corresponding integer points of the quadratic form. The results of the previous sections now allow us to extend the results in the above papers to study more general isometry problems through this lens. They will also strengthen the results of the above mentioned papers by filling in the ``missing equations" necessary to complete the story (see Corollary \ref{cor:quadform} below).

\begin{propn}
\label{propn:decomp}
If $w\in\Lambda$ and $\phi\in \mathrm{GL}(\Lambda)$ decomposes as \[\phi = \phi^{(0)} + \phi^{(1)} = \phi_0 + \frac{\text{wt}(\phi)_{B,w}}{B(w,w)}\phi_{B,w,w} + \phi_{B,w,a}+\phi_{B,b,w}\] with $a,b\in\{w\}_B^{\perp}$, then $a\in \frac{1}{B(w,w)^2}\Lambda^*$ and $b\in \frac{1}{B(w,w)^2}\Lambda$.
\end{propn}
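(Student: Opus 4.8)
The plan is to recover the vectors $a$ and $b$ explicitly from $\phi$ (by testing the decomposition against $w$ and against $\{w\}_B^{\perp}$) and then to read off the denominators using only two integrality facts: that $B$ takes integer values on $\Lambda$, and that $\phi(\Lambda)=\Lambda$, so that $\phi(u)\in\Lambda$ whenever $u\in\Lambda$. The key preliminary observation is that $\phi_0$ has weight $0$, so Lemma \ref{lemma:equivSn} gives $\phi_0(w)=0$ and $B(w,\phi_0(\cdot))=0$ identically; these let all the $\phi_0$-contributions drop out of the pairings below.

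For $b$, I would evaluate the decomposition at $w$. Since $\phi_0(w)=0$, $\phi_{B,w,w}(w)=B(w,w)w$, $\phi_{B,w,a}(w)=B(a,w)w=0$ (as $a\in\{w\}_B^{\perp}$), and $\phi_{B,b,w}(w)=B(w,w)b$, this collapses to
\[\phi(w)=\mathrm{wt}(\phi)_{B,w}\,w+B(w,w)\,b.\]
Solving for $b$ and substituting $\mathrm{wt}(\phi)_{B,w}=B(w,\phi(w))/B(w,w)$ gives
\[B(w,w)^2\,b=B(w,w)\,\phi(w)-B(w,\phi(w))\,w.\]
Now $w\in\Lambda$ forces $\phi(w)\in\Lambda$, while $B(w,w)$ and $B(w,\phi(w))$ are integers; the right-hand side is therefore an integer combination of $\phi(w),w\in\Lambda$, so $B(w,w)^2b\in\Lambda$, i.e.\ $b\in\frac{1}{B(w,w)^2}\Lambda$.

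For $a$, the strategy is to obtain $B(a,u)$ as an explicit functional of $\phi$. Evaluating the decomposition at $v\in\{w\}_B^{\perp}$ kills the $\phi_{B,w,w}$ and $\phi_{B,b,w}$ terms and leaves $\phi(v)=\phi_0(v)+B(a,v)w$; pairing with $w$ and using $B(w,\phi_0(v))=0$ yields $B(a,v)=B(w,\phi(v))/B(w,w)$. To promote this to all $u\in\Lambda$ (and not merely $u\in\Lambda\cap\{w\}_B^{\perp}$, which need not span) I would apply it to the orthogonal projection $u_\perp=u-\frac{B(w,u)}{B(w,w)}w$; since $B(a,w)=0$ we have $B(a,u)=B(a,u_\perp)$, and clearing denominators gives
\[B(w,w)^2\,B(a,u)=B(w,w)\,B(w,\phi(u))-B(w,u)\,B(w,\phi(w)).\]
Each factor on the right is an integer when $u\in\Lambda$ (again using $\phi(u),\phi(w)\in\Lambda$), so $B\!\left(u,B(w,w)^2a\right)\in\mathbb{Z}$ for all $u\in\Lambda$; by definition of the dual lattice this is exactly $B(w,w)^2a\in\Lambda^*$, i.e.\ $a\in\frac{1}{B(w,w)^2}\Lambda^*$.

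The only delicate point, and the source of the asymmetry between the two conclusions, is this projection step for $a$. The vector $b$ is built directly out of the lattice vector $\phi(w)$, so it lands (up to the denominator) back in $\Lambda$; by contrast $a$ is only ever seen through the pairings $B(a,u)$, and the projection needed to evaluate these on all of $\Lambda$ introduces the extra division by $B(w,w)$, which is precisely why $a$ is controlled in $\Lambda^*$ rather than in $\Lambda$. I expect the main care to be in confirming that the $\phi_0$-terms vanish in both pairings (so the formulas are genuinely denominator-clean apart from the stated $B(w,w)^2$) and in noting that no integrality of $a$ or $b$ individually is needed as input, only integrality of $B$ on $\Lambda$ together with the invariance $\phi(\Lambda)=\Lambda$.
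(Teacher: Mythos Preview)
Your proof is correct. The treatment of $b$ is essentially identical to the paper's: both compute $\phi(w)=\mathrm{wt}(\phi)_{B,w}\,w+B(w,w)\,b$ (the paper phrases this as $\phi^{(1)}(w)=\phi(w)-\phi^{(0)}(w)$ with $b=\frac{1}{B(w,w)}\phi^{(1)}(w)$) and read off $B(w,w)^2b\in\Lambda$.

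For $a$ you take a genuinely different route. The paper argues by duality: it observes that $\phi\in\mathrm{GL}(\Lambda)$ forces $\phi^{\dagger}(\Lambda^{*})=\Lambda^{*}$, notes (via Lemma~\ref{lemma:rewriteSV}) that the adjoint swaps the roles of $a$ and $b$ in the $E^{(1)}(B,w)$-component, and then reruns the $b$-argument for $\phi^{\dagger}$ acting on $w\in\Lambda\subseteq\Lambda^{*}$ to obtain $a=\frac{1}{B(w,w)}(\phi^{(1)})^{\dagger}(w)\in\frac{1}{B(w,w)^{2}}\Lambda^{*}$. You instead compute $B(a,u)$ directly for every $u\in\Lambda$ by projecting onto $\{w\}_B^{\perp}$, obtaining the closed formula $B(w,w)^{2}B(a,u)=B(w,w)B(w,\phi(u))-B(w,u)B(w,\phi(w))\in\mathbb{Z}$, and then invoke the \emph{definition} of $\Lambda^{*}$. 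Your approach is more elementary (no need to check that the adjoint preserves the dual lattice) and yields an explicit integer expression for $B(w,w)^{2}B(a,u)$ that could be useful in the later Diophantine analysis; the paper's approach is more conceptual and makes the appearance of $\Lambda^{*}$ (rather than $\Lambda$) structurally transparent as the object on which $\phi^{\dagger}$ naturally acts.
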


\begin{proof}
We first claim that $\phi^{(0)}(w)\in\frac{1}{B(w,w)}\Lambda$ and $\phi^{(1)}(w)\in\frac{1}{B(w,w)}\Lambda$. The first inclusion follows from the proof of Lemma \ref{lemma:equivSn} since: \begin{align*}\phi^{(0)}(w) = \frac{B(w,\phi^{(0)}(w))}{B(w,w)}w =  \frac{B(w,\phi^{(0)}(w)+\phi^{(1)}(w))}{B(w,w)}w = \frac{B(w,\phi(w))}{B(w,w)}w \in \frac{1}{B(w,w)}\Lambda,\end{align*} and the second follows from the fact that $\phi^{(1)}(w) = \phi(w) - \phi^{(0)}(w)\in\frac{1}{B(w,w)}\Lambda$. Note that the proof of Lemma \ref{lemma:equivVn} gives $b = \frac{1}{B(w,w)}\phi^{(1)}(w)$, and so $b\in\frac{1}{B(w,w)^2}\Lambda$. For the claim about $a$ we recall that by Lemma \ref{lemma:rewriteSV} we have that
\begin{align*}
    \phi^\dagger = (\phi^\dagger)^{(0)} + (\phi^\dagger)^{(1)} = (\phi^{(0)})^{\dagger} + (\phi^{(1)})^{\dagger},
\end{align*} since $(\phi^{(0)})^{\dagger}\in E^{(0)}(B,w)$ and $(\phi^{(1)})^{\dagger}\in E^{(1)}(B,w)$.
Note that $\phi(\Lambda) = \Lambda$ implies that $\phi^{\dagger}(\Lambda^*) = \Lambda^*$. We also have $w\in\Lambda\subseteq\Lambda^*$, and so we may apply the same argument as above to conclude that $(\phi^{(0)})^{\dagger}(w)\in\frac{1}{B(w,w)}\Lambda\subseteq\frac{1}{B(w,w)}\Lambda^*$, hence that $(\phi^{(1)})^{\dagger} = \phi^{\dagger}(w) - (\phi^{(0)})^{\dagger}(w) \in\frac{1}{B(w,w)}\Lambda^*$. Finally, since the adjoint swaps the roles of $a$ and $b$ in the $E^{(1)}(B,w)$ component we learn that $a = \frac{1}{B(w,w)}(\phi^{(1)})^{\dagger}(w)\in\frac{1}{B(w,w)^2}\Lambda^*$.
\end{proof}

\begin{corollary}
\label{cor:quadform}
Suppose that $B'$ is a non-degenerate symmetric bilinear form on $V$. If $B' = B_{\phi}$ for some $\phi\in\text{GL}(\Lambda)$ then the vectors $\tilde{a} = B(w,w)^2a\in\Lambda^*$, $\tilde{b} = B(w,w)^2b\in\Lambda$ from Proposition \ref{propn:decomp} satisfy:

\begin{align*}
B(w,w)^2B'(w,w) &= B(w,w)B(w,\phi(w))^2 +B(\tilde{b},\tilde{b}),\\
B(w,w)^3B'(w,z) &= B(w,w)B(w,\phi(w))B(\tilde{a},z_0) + B(\tilde{b}, \tilde{\phi}_0(z_0)),\\
B(w,w)^4B'(z,z) &= B(\tilde{\phi}_0(z_0), \tilde{\phi}_0(z_0))+B(w,w)B(\tilde{a},z_0)^2,\\
\end{align*} 
for any $z_0\in\Lambda$ (with $z = z_0 - \frac{B(z_0,w)}{B(w,w)}w\in \{w\}^{\perp}$ and $\tilde{\phi}_0(z_0) = B(w,w)^2\phi_0(z_0)\in\Lambda$).
\end{corollary}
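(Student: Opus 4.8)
The plan is to derive the three displayed equations directly from Theorem \ref{thm:wt} by substituting the integral normalizations and clearing denominators. Theorem \ref{thm:wt} already provides three equations for $B'(w,w)$, $B'(w,z_1)$, and $B'(z_1,z_2)$ in terms of $\mathrm{wt}(\phi)_{B,w}$, $a$, $b$, and $\phi_0$. The corollary is essentially a bookkeeping exercise: rewrite these in terms of the integral vectors $\tilde{a} = B(w,w)^2 a$, $\tilde{b} = B(w,w)^2 b$, and $\tilde{\phi}_0(z_0) = B(w,w)^2 \phi_0(z_0)$, all of which lie in $\Lambda$ or $\Lambda^*$ by Proposition \ref{propn:decomp}, so that the resulting equations are integral.

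First I would record the translation $\mathrm{wt}(\phi)_{B,w} = B(w,\phi(w))/B(w,w)$ from the definition of weight, so that $B(w,w)\,\mathrm{wt}(\phi)_{B,w} = B(w,\phi(w))$; this explains how the weight is absorbed into the integral $B(w,\phi(w))$ terms. For the first equation I would take $B'(w,w) = B(w,w)(\mathrm{wt}(\phi)_{B,w}^2 + B(w,w)B(b,b))$ from Theorem \ref{thm:wt}, multiply through by $B(w,w)^2$, and substitute $B(w,w)^2 \mathrm{wt}(\phi)_{B,w}^2 = B(w,\phi(w))^2$ and $B(w,w)^4 B(b,b) = B(\tilde{b},\tilde{b})$, using bilinearity to pull the scalar $B(w,w)^2$ out of each slot of $B(b,b)$. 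A small check on the powers of $B(w,w)$ confirms the left side becomes $B(w,w)^2 B'(w,w)$ and the right side matches the stated form.

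The second and third equations require the auxiliary vector $z = z_0 - \frac{B(z_0,w)}{B(w,w)}w$, which is the orthogonal projection of an arbitrary lattice vector $z_0$ onto $\{w\}_B^{\perp}$; I would verify $B(z,w)=0$ directly and note that since $\phi_0$ has weight $0$ we have $B(w,\phi_0(\cdot))=0$, so $\phi_0(z) = \phi_0(z_0)$ and $B(a,z) = B(a,z_0)$ (as $a\perp w$ kills the $w$-component of $z_0$). This lets me replace the abstract $z_1,z_2\in\{w\}_B^{\perp}$ of Theorem \ref{thm:wt} by the concrete $z$ arising from $z_0\in\Lambda$, then substitute $\tilde{a},\tilde{b},\tilde{\phi}_0(z_0)$ and clear denominators exactly as in the first equation, tracking that the appropriate powers are $B(w,w)^3$ and $B(w,w)^4$ respectively.

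The only genuinely delicate point — and the step I would flag as the main obstacle — is ensuring that all substituted quantities are actually \emph{integers}, i.e.\ that the cleared equations live in $\mathbb{Z}$ rather than merely in $\mathbb{Q}$. This is where Proposition \ref{propn:decomp} is essential: it guarantees $\tilde{a}\in\Lambda^*$, $\tilde{b}\in\Lambda$, so that $B(\tilde{b},\tilde{b})$ and $B(\tilde{a},z_0)$ (with $z_0,\tilde{\phi}_0(z_0)\in\Lambda$) are integral by the integrality hypothesis $B(\Lambda,\Lambda)\subseteq\mathbb{Z}$ together with $B(\Lambda^*,\Lambda)\subseteq\mathbb{Z}$. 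Verifying $\tilde{\phi}_0(z_0)\in\Lambda$ likewise reduces to a weight-zero projection argument showing $\phi_0(z_0) = \phi^{(0)}(z_0) - \frac{\mathrm{wt}(\phi)_{B,w}}{B(w,w)}\phi_{B,w,w}(z_0)$ has denominator at most $B(w,w)^2$; otherwise the remaining content is routine bilinear expansion and careful tracking of the powers of $B(w,w)$.
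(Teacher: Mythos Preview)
Your proposal is correct and follows essentially the same route as the paper: apply Theorem~\ref{thm:wt}, pass from the abstract $z_1,z_2\in\{w\}_B^\perp$ to the projection $z$ of a lattice vector $z_0$, rewrite in terms of $\tilde a,\tilde b,\tilde\phi_0(z_0)$, and clear denominators. Two small remarks: your justification ``$B(w,\phi_0(\cdot))=0$, so $\phi_0(z)=\phi_0(z_0)$'' is not quite the right implication---what you need is $\phi_0(w)=0$, which follows because $\phi_0\in E^{(0)}(B,w)$ forces $\phi_0(w)\in Kw$ and weight~$0$ forces the scalar to vanish; and the paper establishes $\tilde\phi_0(z_0)\in\Lambda$ slightly differently, via the identity $\phi(z)=\phi_0(z_0)+B(a,z_0)w$ together with $\phi(z)\in\tfrac{1}{B(w,w)}\Lambda$ and $B(a,z_0)w\in\tfrac{1}{B(w,w)^2}\Lambda$, rather than by decomposing $\phi^{(0)}(z_0)$ as you suggest (both arguments work).
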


\begin{proof}
By Theorem \ref{thm:wt} we have that \[B'(w,w) = B(w,w)(\text{wt}(\phi)_{B,w}^2 + B(w,w)B(b,b)).\] Clearing denominators gives the first equation. Also note that
\begin{align*}
\phi(z) &=\phi_0(z) + B(a,z)w\\ &= \phi_0(z_0) + \frac{B(z_0,w)}{B(w,w)}\phi_0(w) + \left(B(a,z_0) + \frac{B(z_0,w)}{B(w,w)}B(a,w)\right)w\\ &= \phi_0(z_0) + B(a,z_0)w.
\end{align*}

Since $\phi(z)\in \frac{1}{B(w,w)}\Lambda$ and $B(a,z_0)w = \frac{1}{B(w,w)^2}B(\tilde{a},z_0)w\in \frac{1}{B(w,w)^2}\Lambda$, we find that $\phi_0(z_0)\in\frac{1}{B(w,w)^2}\Lambda$. Again, by Theorem \ref{thm:wt} we have: \begin{align*}B'(w,z_1) &= B(w,w)(\text{wt}(\phi)_{B,w}B(a,z_1)+B(b,\phi_0(z_1))),\\ B'(z_1,z_2) &= B(\phi_0(z_1), \phi_0(z_2)) + B(a,z_1)B(a,z_2)B(w,w).\end{align*} Inserting $z_1 = z_2 = z$  and clearing denominators gives the second and third equations.
\end{proof}

Corollary \ref{cor:quadform} tells us that information about potential lattice isometries can be determined by solving certain Diophantine equations. For example, the first and third equations define integral quadratic forms, and if the underlying quadratic space is positive definite then these quadratic forms are also positive definite (allowing for a finite search). If there are no solutions to any of these equations then we immediately deduce that no such isometry exists.

As mentioned above, after choosing a basis for $\Lambda$ we see that the above equations give insight into the solution of integer matrix factorization problems of the form $\mathcal{B}' = M^T \mathcal{B}M$ with $M,\mathcal{B},\mathcal{B}'\in \text{GL}_n(\mathbb{Z})$ and $\mathcal{B},\mathcal{B}'$ symmetric (Gram matrices).

In the examples below we consider the space $V = \mathbb{Q}^n$ and lattice $\Lambda = \mathbb{Z}^n$ (i.e.\ $\mathbb{Z}$-span of the standard basis). In  
Examples \ref{eg:rank2} to \ref{eg:rank3gen} we find that the first equation of Corollary \ref{cor:quadform} suffices to prove that no isometry exists. Examples \ref{eg:rank4} and \ref{eg:wilson} require us to use all three equations to obtain sufficient information to deduce whether or not there is a solution to the integer matrix factorisation. This highlights a clear difference with \cite{higham} where they were only able to use the first equation.

\begin{eg}
\label{eg:rank2}
If $n=2$ then \begin{align*}B(u,v) &= u_1v_1 + 5u_2v_2,\\ B'(u,v) &= 2u_1v_1 + u_1v_2 + u_2v_1 + 3u_2v_2,\end{align*} are non-degenerate symmetric bilinear forms on $V$, and we see that $\Lambda$ is integral with respect to both forms. 

Suppose that $B' = B_{\phi}$ for some $\phi\in\text{GL}(\Lambda) \cong \text{GL}_2(\mathbb{Z})$. Choosing $w = (1,0)$ gives $\tilde{b} = b = (0,t)\in\mathbb{Z}^2$, $B(w,w) = 1$ and $B'(w,w) = 2$. The first equation of Corollary \ref{cor:quadform} then implies that there are integer solutions to the equation: \[2 = B(w,\phi(w))^2 + 5t^2.\] This is a contradiction, hence $(\Lambda,B')$ and $(\Lambda,B)$ are not isometric (despite these forms having Gram matrices of the same determinant, a necessary condition for isometry). 
\end{eg}

We can also use Corollary \ref{cor:quadform} to prove that certain infinite families of pairs of forms in rank 2 and rank 3 lattices cannot be isometric, despite their corresponding Gram matrices having the same determinant.

\begin{eg}
\label{eg:rank2gen}
If $n=2$ then \[B=(u,v) = m^2u_1v_1 + n^2u_2v_2\] is a non-degenerate symmetric bilinear form on $V$ whenever $m,n\in\mathbb{Q}\backslash\{0\}$. Suppose that $m,n\in\mathbb{Z}\backslash\{0\}$. Then $\Lambda$ is an integral lattice with respect to $B$. 

For any choice of $\alpha,\beta,\gamma\in\mathbb{Z}$ such that $\alpha\gamma - \beta^2 = (mn)^2$, we have that \[B'(u,v) = \alpha u_1v_1 + \beta u_1v_2 + \beta u_2v_1 + \gamma u_2v_2\] is another non-degenerate symmetric bilinear form on $V$ with the same determinant as $B$. Also, $\Lambda$ is integral with respect to this form.

Suppose that there exists a prime $p\equiv 3 \bmod 4$ dividing $\alpha$ to an odd power. Then $B' = B_{\phi}$ for some $\phi\in\text{GL}(\Lambda)$ is impossible, since choosing $w = (1,0)$ in the first equation of Corollary \ref{cor:quadform} gives: \[\alpha\cdot m^4 = m^2B(w,\phi(w))^2 + n^2t^2 = (mB(w,\phi(w))^2 + (nt)^2.\] This has no integer solutions by Fermat's theorem on sums of two squares (e.g.\ Theorem $13.3$ of \cite{burton} and the Corollary that follows). It follows that $(\Lambda, B)$ and $(\Lambda,B')$ are not isometric.
\end{eg}

\begin{eg}
\label{eg:rank3gen}
If $n=3$ then \begin{align*}B(u,v) &= (2m^2+1)u_1v_1 - u_1v_2 - u_2v_1 + u_2v_2 + 2m^2u_3v_3,\\ B'(u,v) &= 4m^3u_1v_1 + mu_2v_2 + u_3v_3,\end{align*} are non-degenerate symmetric bilinear forms on $V$ for any $m\in\mathbb{Q}\backslash\{0\}$. Suppose that $m\in\mathbb{Z}\backslash\{0\}$. Then $\Lambda$ is an integral lattice with respect to $B$ and also with respect to $B'$. As in previous examples, these forms have the same determinant and so there is a chance that $(\Lambda,B)$ and $(\Lambda,B')$ are isometric.

Suppose that $B' = B_{\phi}$ for some $\phi\in\text{GL}(\Lambda)$. Choosing $w = (1,1,1)$ gives $\tilde{b} = (b_1, b_2, -b_1)$ for some $b_1,b_2\in\mathbb{Z}$, since $B(w,\tilde{b}) = 0$ if and only if $2m^2(b_1 + b_3) = 0$. It is then readily calculated that \begin{align*}B(w,w) &= 4m^2,\\ B'(w,w) &= (1+m+4m^3),\\ B(\tilde{b},\tilde{b}) &= (4m^2 + 1)b_1^2 - 2b_1b_2 + b_2^2 = (2mb_1)^2 + (b_1-b_2)^2,\end{align*} so that equation one of Corollary \ref{cor:quadform} gives: \begin{align*}16m^4(4m^3+m+1) &= 4m^2B(w,\phi(w))^2 + (2mb_1)^2 + (b_1-b_2)^2,\\ &= (2mB(w,\phi(w))^2 + (2mb_1)^2 + (b_1-b_2)^2.\end{align*}

By Legendre's theorem on sums of three squares (e.g.\ Theorem $13.5$ of \cite{burton}), there are no integer solutions to this equation whenever $16m^4(4m^3+m+1)$ is of the form $4^t(8k+7)$ for some $t,k\geq 0$. It follows that in this case there is no such isometry between $(\Lambda,B)$ and $(\Lambda,B')$.

Since any odd integer satisfies $m^4\equiv 1 \bmod 8$, the above condition is equivalent to $4m^3 + m  + 1 = 4^t(8k+7)$ for some $t,k\geq 0$. By a tedious case by case check, one finds that asymptotically this happens for $\frac{1}{6}$ of the possible $m$ values. Thus our method has proved that at least $\frac{1}{6}$ of the pairs of forms in this family cannot be isometric over $\mathbb{Z}$. In fact, there are no values of $m$ for which these pairs of forms are isometric, and this can be seen by similar calculations for other choices of vector $w$.

Similarly, for any choice of $\alpha,\beta,\gamma\in\mathbb{Z}\backslash\{0\}$ satisfying $\alpha\gamma - \beta^2 = 4m^4$ the form: \[B''(u,v) = \alpha u_1v_1 + \beta u_1v_2 + \beta u_2v_1 + \gamma u_2v_2 + u_3v_3\] is a non-degenerate symmetric bilinear form on $V$ such that $\Lambda$ is integral with respect to $B''$ (and also has the same determinant as $B$).

Suppose again that $w=(1,1,1)$ and that $B''(w,w) = \alpha + 2\beta + \gamma + 1 = 4^t(8k+7)$ for some $t,k\geq 0$. Then the same argument as above shows that $(\Lambda,B)$ and $(\Lambda, B'')$ are not isometric either, by Legendre's theorem on sums of three squares.

Numerical examples of the above are given by $m = 3$ and $(\alpha,\beta,\gamma) = (60,6,6)$. Since we have that \begin{align*}4m^3+m+1 &= 28 = 4\cdot 7,\\ \alpha\gamma-\beta^2 &= 4m^4 = 324,\\ \alpha+2\beta+\gamma+1 &= 79 \equiv 7 \bmod 8,\end{align*} we conclude that the following forms are such that $(\Lambda,B)$ and $(\Lambda,B')$ are not isometric and $(\Lambda,B)$ and $(\Lambda,B'')$ are not isometric: \begin{align*}B(u,v) &= 19u_1v_1 - u_1v_2 - u_2v_1 + u_1v_2 + 18u_3v_3,\\ B'(u,v) &= 108u_1v_1 + 3u_2v_2 + u_3v_3,\\ B''(u,v) &= 60u_1v_1 + 6u_1v_2 + 6u_2v_1 + 6u_2v_2 + u_3v_3.\end{align*}
\end{eg}

\begin{eg}
\label{eg:rank4}
    Taking $n=4$, it will no longer be sufficient in general to use only the first equation in order to obtain a contradiction. For example, consider
    \begin{align*}
        B(u,v)&=2u_1v_1+u_1v_2+u_2v_1+2u_2v_2+2u_3v_3+4u_4v_4, \\
        B^\prime(u,v)&=2u_1v_1+u_1v_2+u_2v_1+u_1v_3+u_3v_1+2u_2v_2+2u_3v_3+6u_4v_4.
    \end{align*}
    We choose $w=(1,0,0,0)$, so that $B(w,w)=B^\prime(w,w)=2$ and $\tilde{b}=(b_1,-2b_1,b_3,b_4)$ for some $b_1,b_3,b_4\in\mathbb{Z}$. Equation one of Corollary \ref{cor:quadform} gives us the equation:
    \begin{equation*}
        8=6b_1^2+2b_3^2+4b_4^2+2B(w,\phi(w))^2.
    \end{equation*}
    There are 10 putative integer solutions to this equation. Next, we make the choices:
    \begin{equation*}
        z_{01}=(0,1,0,0), z_{02}=(0,0,1,0), z_{03}=(0,0,0,1).
    \end{equation*}
    for $z_0$ in Corollary \ref{cor:quadform} and seek solutions to equation three. We then substitute all putative solutions to equations one and three into equation two. In total we find that $309$ solutions remain, and for these we were then able to reconstruct the quantities $\tilde{a}$, $\tilde{b}$, $\tilde{\phi}_0$ and $B(w, \phi(w))$, hence recovering all possible rational isometries $\phi$. Doing this gives the following 8 matrices $M\in\text{GL}_4(\mathbb{Q})$ satisfying $\mathcal{B}' = M^T\mathcal{B}M$:
    \begin{align*}
     &\left(\begin{smallmatrix}  \frac{1}{2} & \frac{3}{4} & \frac{3}{4} & \frac{1}{2} \\
     -1 & -\frac{1}{2} & -\frac{1}{2} & -1 \\ -\frac{1}{2} & -\frac{1}{4} & -\frac{1}{4} & \frac{3}{2} \\
     0 & -\frac{1}{2} & \frac{1}{2} & 0 \end{smallmatrix} \right),  \left(\begin{smallmatrix}  \frac{1}{2} & \frac{3}{4} & \frac{3}{4} & \frac{1}{2} \\
     -1 & -\frac{1}{2} & -\frac{1}{2} & -1 \\ -\frac{1}{2} & -\frac{1}{4} & -\frac{1}{4} & \frac{3}{2} \\
     0 & \frac{1}{2} & -\frac{1}{2} & 0 \end{smallmatrix} \right),
     \left(\begin{smallmatrix}  \frac{1}{2} & \frac{3}{4} & \frac{3}{4} & \frac{1}{2} \\
     -1 & -\frac{1}{2} & -\frac{1}{2} & -1 \\ \frac{1}{2} & \frac{1}{4} & \frac{1}{4} & -\frac{3}{2} \\
     0 & -\frac{1}{2} & \frac{1}{2} & 0 \end{smallmatrix} \right)
     \left(\begin{smallmatrix}  \frac{1}{2} & \frac{3}{4} & \frac{3}{4} & \frac{1}{2} \\
     -1 & -\frac{1}{2} & -\frac{1}{2} & -1 \\ \frac{1}{2} & \frac{1}{4} & \frac{1}{4} & -\frac{3}{2} \\
     0 & \frac{1}{2} & -\frac{1}{2} & 0 \end{smallmatrix} \right), \\
     &\left(\begin{smallmatrix}  \frac{1}{2} &  1  & 0  & 1 \\
     0  & 0 & 0 & -2\\ \frac{1}{2} & 0 & 1 & 0 \\-\frac{1}{2} & 0 & 0 & 0 \end{smallmatrix} \right),
     \left(\begin{smallmatrix}  \frac{1}{2} &  1  & 0  & 1 \\
     0  & 0 & 0 & -2\\ \frac{1}{2} & 0 & 1 & 0 \\\frac{1}{2} & 0 & 0 & 0 \end{smallmatrix} \right),
     \left(\begin{smallmatrix} 0 & \frac{1}{2} & \frac{1}{2} & 1 \\
    0 & 0 & 0 & -2 \\ 1 & \frac{1}{2} & \frac{1}{2} & 0 \\  0 & -\frac{1}{2} & \frac{1}{2} & 0  \end{smallmatrix} \right),
    \left(\begin{smallmatrix} 0 & \frac{1}{2} & \frac{1}{2} & 1 \\
    0 & 0 & 0 & -2 \\ 1 & \frac{1}{2} & \frac{1}{2} & 0 \\  0 & \frac{1}{2} & -\frac{1}{2} & 0  \end{smallmatrix} \right).
    \end{align*}
    Since none of these matrices are in $GL_4(\mathbb{Z})$, we deduce there is no isometry between $(\Lambda,B)$ and $(\Lambda,B')$.
\end{eg}
In all the previous examples, we have been attempting to find a solution to $\mathcal{B}' = M^T \mathcal{B}M$ with $M,\mathcal{B},\mathcal{B}'\in \text{GL}_n(\mathbb{Z})$ and $\mathcal{B},\mathcal{B}'$ symmetric and not equal to the identity matrix. These examples were not accessible to the authors of \cite{higham}, \cite{hill}, \cite{lettington} as they only considered the case where $\mathcal{B}=I$, the identity matrix. In the example below, we will revisit an example studied in \cite{higham} to see a more direct comparison of the methods.
\begin{eg}
\label{eg:wilson}
    In Section 3 of \cite{higham}, the authors use the superalgebra structure corresponding to the choices $\mathcal{B} = I_4$ and $w=(1,1,1,1)$ (in our notation) to determine an integer matrix factorisation $W = M^TM$, where $W$ is the Wilson matrix
    \begin{equation*}
        W=\left(\begin{smallmatrix}
            5 & 7 & 6 & 5\\ 7 & 10 & 8 & 7\\ 6 & 8 & 10 & 9\\ 5 & 7 & 9 & 10
        \end{smallmatrix} \right).
    \end{equation*}
    This particular matrix was chosen by the authors to test their methods, due to $W$ being mildy ill conditioned. By considering an analogue of the first equation of Corollary \ref{cor:quadform}, they find that there are $1728$ integer solutions. Each is then studied in detail in order to figure out which pairs of vectors $a,b\in\{w\}^{\perp}$ lead to an integer matrix factorization (as opposed to a rational one). This calculation is reported to have taken a total $34$ minutes on Mathematica (more details can be found in the paper). 
    
    Using our more general methods, we were able to shorten this calculation to under a second, highlighting the advantages of our approach. In particular the choice $w=(1,0,0,0)$ leads to only $24$ solutions to the first equation of Corollary \ref{cor:quadform}. As in the example above, we then used the third and second equations of the corollary to recover all possible maps $\phi$. Doing this quickly revealed $96$ possible $M \in GL_4(\mathbb{Z})$ such that $W = M^TM$, all of the form $M=UM^\prime$ where $U$ is an orthogonal matrix with integer entries and
    \begin{equation*}
        M^\prime=\left(\begin{smallmatrix}
            2&3&2&2 \\ 1&1&2&1 \\ 0&0&1&2 \\ 0&0&1&1
        \end{smallmatrix}\right).
    \end{equation*}
    This matches the outcome in \cite{higham}, although allowing flexibility in the choice of $w$ has allowed for a more efficient computation. 
    \end{eg}

\end{document}